\newtheorem{theorem}{Theorem}
\newtheorem{proposition}[theorem]{Proposition}
\newtheorem{lemma}[theorem]{Lemma}
\newtheorem{remark}[theorem]{Remark}
\newtheorem{definition}[theorem]{Definition}
\newtheorem{example}[theorem]{Example}
\numberwithin{theorem}{section}
\newcommand\pa{{\mathfrak a}}
\newcommand\pb{{\mathfrak b}}
\newcommand{\red}{\textcolor{black}}
\title{\red{A new proof of} monomialisation from 3-folds to surfaces}
\author{Yueting Jiang}
\address{Universit\'e Paris Cit\'e and Sorbonne Universit\'e, UFR de Math\'ematiques, Institut de Math\'ematiques de Jussieu-Paris Rive Gauche, UMR7586, F-75013 Paris, France.}
\email{yueting.jiang@imj-prg.fr}
\begin{document}

\begin{abstract}
    In this paper, we give a new proof of the foundational result, due to S. Cutkosky, \red{on the existence of a monomialisation of a morphism from a 3-fold to a surface. Our proof brings to the fore the notion of log-Fitting ideals, and requires us to develop new methods related to Rank Theorems and log-Fitting ideals.
    }
\end{abstract}

\maketitle

\section{Introduction}
Monomialisation of morphisms is an important problem in resolution of singularities, which is a variation of classical resolution of singularities for morphisms. Starting in the time of Zariski, there has been a huge body of work on the problem of monomialisation. Variations include works on complex surfaces \cite{akbulut2012topology}, toroidalisation, factorisation and semistable reduction \cite{abramovich2013weak}, \cite{abramovich1997weak}, \cite{abramovich2002torification}, \cite{adiprasito2018semistable}, \cite{wlodarczyk2003toroidal}, on desingularisation \cite{abramovich2020relativedesingularizationprincipalizationideals}, and on monomialisation 
 \cite{cutkosky2002monomialization}, \cite{Cutkosky2013}, \cite{denef2013monomialization}, \cite{cutkosky2007toroidalization}, \cite{cutkosky2017local}, \cite{bbmono}. Our work concerns the latter version, where we are required to simplify the morphism via simple transformations in the source and target, namely smooth blowings-up.

In this paper, we provide a new proof of S. Cutkosky's foundational result of a monomialisation of a dominant morphism from a 3-fold to a surface \cite{cutkosky2002monomialization,Cutkosky2013}, by relying on the theory of log-Fitting ideals (recently used in this context in \cite{bbmono}) and on a result from S. Cutkosky and O. Kashcheyeva that a \emph{strongly-prepared morphism} can be reduced to a monomial morphism \cite{cutkosky2004monomialization}. We hope that our proof will help to clarify the Theorem, highlight key ingredients which can be independently studied, and help in the future development of the problem. In fact, the monomialisation result is also known to hold true for dominant morphisms between smooth projective 3-folds
\cite{cutkosky2007toroidalization}, and there are local results in arbitrary dimensions \cite{cutkosky1999local}, \cite{cutkosky2017local} and \cite{bbmono}. A global version in dimension higher than 3 remains unknown.

More precisely, our main result is \red{a new proof of}:
\begin{theorem}[{\cite[Theorem 1.3]{cutkosky2002monomialization}}]\label{mainthm}
    Let $\Phi:(X,E)\rightarrow (S,F)$ be a dominant morphism of pairs (Definition \ref{def:MorphismPairs}) between $\mathbb{K}$-varieties (Definition \ref{kv})
    with dim$X=3$ and dim$S=2$. Then $\Phi$ admits a monomialisation as \red{recalled}
    in Definition \ref{MOM}, i.e. there exists a commutative diagram:
    \begin{center}
	\begin{tikzcd}
		X \arrow[r, "\Phi"] \arrow[d, "\sigma"]
		& S \arrow[d, "\tau"] \\
		X^{\prime} \arrow[r, "\Phi^{\prime}"]
		& S^{\prime}
	\end{tikzcd}.
\end{center}
with $\sigma,\tau$ finite sequences of blowings-up and $\Phi^{\prime}$ a monomial morphism (Definition \ref{AMM}).
\end{theorem}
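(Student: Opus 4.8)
The plan is to decouple the problem into two parts joined by the notion of a \emph{strongly-prepared} morphism. Granting the theorem of Cutkosky and Kashcheyeva \cite{cutkosky2004monomialization}, which converts a strongly-prepared morphism into a monomial one by further blowings-up of source and target, it suffices to produce, after finitely many blowings-up that keep $E$ and $F$ simple normal crossings, a strongly-prepared model of $\Phi$. The entire difficulty is thereby concentrated in \emph{reaching} strong preparation, and this is where the log-Fitting ideals are meant to do the work.

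The invariant I would track is the family of log-Fitting ideals attached to the logarithmic differential $d\Phi\colon \Phi^{*}\Omega^{1}_{S}(\log F)\to \Omega^{1}_{X}(\log E)$, equivalently to the sheaf of relative logarithmic differentials $\Omega^{1}_{X/S}(\log)$. Locally $d\Phi$ is represented by a $3\times 2$ matrix over $\mathcal{O}_{X}$, and the ideals generated by its minors record precisely the locus where $\Phi$ degenerates relative to the boundary. The key point is that for a monomial morphism these ideals are themselves monomial, i.e. generated by a single boundary monomial in adapted coordinates; so the target to aim for is that, after admissible blowings-up, every log-Fitting ideal of $\Phi$ becomes a monomial supported on the exceptional and boundary divisors.

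Concretely I would proceed as follows. First, normalize the target: blow up $S$ so that $F$ is simple normal crossings and the relevant discriminant-type data downstairs are monomial, then resolve the induced rational map by a source blowing-up so as to retain an honest morphism $X\to S'$ with $\Phi^{-1}(F')\subseteq E'$. Second, principalize the log-Fitting ideals upstairs by a sequence of admissible centres, adapting the logarithmic principalization machinery so that monomiality is preserved and improved. Third, once these ideals are monomial, apply a \emph{Rank Theorem}: the monomial analogue of the classical statement that a map of locally constant rank is, in suitable coordinates, a projection. This is the step that upgrades ``monomial Fitting ideals'' into the explicit local normal forms defining strong preparation. Finally, reconcile these local normal forms with the global simple-normal-crossings bookkeeping and invoke \cite{cutkosky2004monomialization} to conclude.

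The main obstacle is the coupling between source and target together with the termination of the process. Each blowing-up of $S$ alters $\Phi$ and forces a compensating blowing-up of $X$, so one must forbid an infinite ping-pong between the two sides; this demands a carefully chosen numerical invariant built from the log-Fitting ideals --- orders and Newton data along the components of $E$ --- that strictly drops under every admissible centre while remaining stable under the moves one is obliged to allow. Equally delicate is proving the Rank Theorem along the degenerate strata, where $d\Phi$ drops rank over components of $E$ lying above $F$: there the monomialized Fitting ideal must be converted into genuine regular parameters, and one has to rule out the pathological behaviour of logarithmic differentials that could obstruct the normal form. Feeding the output of this step cleanly into the Cutkosky--Kashcheyeva reduction is, to my mind, the crux of the whole argument.
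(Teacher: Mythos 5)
Your skeleton coincides with the paper's architecture: reduce to a strongly prepared morphism, then invoke the Cutkosky--Kashcheyeva theorem (Theorem \ref{blackbox}, \cite{cutkosky2004monomialization}) as a black box, with log-Fitting ideals as the governing invariant. But the two load-bearing steps are asserted rather than proved, and the first would fail as stated. You propose to ``principalize the log-Fitting ideals upstairs by adapting the logarithmic principalization machinery''; this is not a routine application of principalization, because the log-Fitting ideal of the composed morphism $\Phi\circ\sigma$ is \emph{not} the total transform $\sigma^{-1}\mathcal{F}_{n-k}(\Phi)\cdot\mathcal{O}_{X'}$: under a blowing-up adapted to $E$, distinct coefficients $B^{\omega}_{I,J}$ of $\Phi^*\omega$ acquire \emph{distinct} exceptional monomial factors (e.g. $\frac{du}{u}\mapsto\frac{du'}{u'}$ while $dv\mapsto v'u'\frac{du'}{u'}+u'\,dv'$), so principalising the fixed ideal sheaf says nothing about the recomputed Fitting ideal. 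Moreover, the ``ping-pong'' between source and target that you flag is real, and the paper does not defuse it with a Newton-data invariant. Instead it inserts an intermediate notion, \emph{log-rank adapted} morphisms (Definition \ref{wm}): using an algebraic Rank Theorem (Proposition \ref{rankthm}, proved via the geometric rank, Chevalley's theorem and Rond's work) one shows that the images $Y_i$ of the loci where the log-rank drops by $i$ have dimension at most $\min\{n,N\}-i$, so finitely many \emph{target} blowings-up resolve them (Theorem \ref{lem:LrAdapted}, valid in all dimensions); after that, only source blowings-up are needed, and termination is inherited from the birational argument of \cite{da2017resolution}, adapted in Theorem \ref{ESP} with Lemma \ref{HA} supplying the substitute for the missing ideal $\mathcal{F}_0$.

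The second gap is the bridge from monomial log-Fitting ideals to the strong-preparation normal forms. Your appeal to a ``monomial analogue'' of the constant-rank theorem cannot work as stated, since the interesting points are precisely those where the logarithmic differential drops rank along strata of $E$ above $F$, where no constant-rank statement applies. The paper replaces this with an \emph{intrinsic characterisation} (Proposition \ref{light}): a quasi-prepared morphism onto a surface is strongly prepared if and only if the top log-Fitting ideal is locally principal monomial, proved by explicit expansion of $\frac{dx_1}{x_1}\wedge dy_1$ in adapted coordinates, the implicit function theorem, and Lemma \ref{formalpmtopm} to descend principal monomiality from the completion. Note also that quasi-preparedness itself ($\mathrm{Sing}(\Phi)\subseteq E$ and $\Phi^{-1}(F)_{\mathrm{red}}=E$, secured in Lemma \ref{lem:BasicQuasiPrepared}) is a prerequisite for all of this, which your ``normalize the target'' step only partially addresses. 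In short: you reproduce the paper's decomposition faithfully, but both pillars on which it rests --- reaching log-rank adaptedness via a provable Rank Theorem, and the Fitting-ideal characterisation of strong preparation --- are exactly the points your proposal leaves open.
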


\red{The main novelty of this paper lies in the fact that our proof brings to the fore the notion of \emph{logarithmic Fitting ideals} following \cite{da2017resolution,bbmono}; their use had already been suggested for a connected problem in \cite{pardon2001pure} and previously studied by Teissier \cite{Teissier1977TheHO}. Log-Fitting ideals are a great tool to provide intrinsic characterisation of several notions linked to morphisms, including morphisms of Hsiang-Pati form, see e.g. \cite[Theorem 1.2]{bbmono}. More specifically, log-Fitting ideals naturally lead us to break the problem into three main parts.}

The first part consists in reducing an arbitrary dominant morphism of finite type to \emph{log-rank adapted} morphisms, a notion which we introduce in Definition \ref{wm}. This first step can be accomplished in arbitrary dimensions (see Theorem \ref{lem:LrAdapted}), and is the main technical novelty of this paper. Note that the notion of log-rank adapted morphisms provides a generalisation of the conditions obtained in \cite[Theorem 3.4]{da2017resolution} for birational morphisms. In fact, we followed the idea of the proof of \cite[Theorem 3.4]{da2017resolution}, but we need to add new arguments concerning the rank (and the log-rank) of morphisms. In fact, we need an algebraic geometry version of the classical differential geometry Rank Theorem, valid in the real and complex cases, for more general fields. We therefore develop tools related to Rank Theorems in algebraic geometry (see Proposition \ref{rankthm}), following the work of Rond \cite{rond2009homomorphisms}.

\red{The second part consists in reducing a log-rank adapted morphism from a $3$-fold to a surface to a \emph{strongly prepared morphism}, a notion introduced by Cutkosky in \cite{cutkosky2002monomialization}, and studied by Cutkosky and Kashcheyeva in \cite{cutkosky2004monomialization}. In fact, we start by providing an intrinsic characterisation of strongly prepared morphisms in terms of log-Fitting ideals: see Proposition \ref{light}. This allows us to show how the proof from \cite{da2017resolution} can be adapted, in a simple way, to prove this second reduction; see Theorem \ref{ESP}. This result makes the main result of \cite{da2017resolution} more flexible, allowing a possibly larger scope of applications. Cutkosky proves a similar reduction via other methods in \cite{cutkosky2002monomialization}.}

\red{Finally, we rely on Cutkosky and Kashcheyeva's proof that a strongly prepared morphism can be reduced to a monomial morphism \cite{cutkosky2004monomialization} as a black box to conclude the proof of the Theorem.}

\medskip

The paper is organised as follows. In Section \ref{preli} we recall some main definitions and the notion of locally principal ideal sheaves, while in Section \ref{grrt} we first introduce the notion of geometric rank and then establish a version of the Rank Theorem that will be used afterwards. Then we introduce quasi-prepared morphisms and strongly prepared morphisms in the two sections that follow. After that, we define the key concept of log-Fitting ideals in Section \ref{ssec:LogFittingCharacterizationStronglyPrepared} as well as the logarithmic rank. In Section \ref{lam}, we define log-rank adapted morphisms and make the reduction from quasi-prepared morphisms to log-rank adapted morphisms. Finally, the proof of the main theorem is briefly summarised in the last section.

\section{\red{Preliminaries}}
\label{preli}
\subsection{Review \red{of main} definitions}

\begin{definition}[$\mathbb{K}$-variety]
\label{kv}
    Let $\mathbb{K}$ be an algebraically closed field of characteristic zero. A $\mathbb{K}$-variety is a separated, reduced scheme of finite type over $\mathbb{K}$, not necessarily irreducible.
\end{definition}

\begin{remark}
    Blowing-up is projective for $\mathbb{K}$-varieties. Hence, a $\mathbb{K}$-variety remains a $\mathbb{K}$-variety after blowings-up. Moreover, a projective morphism of Noetherian schemes is proper, hence of finite type.
\end{remark}

Let us also recall several notions \red{from \cite{cutkosky2004monomialization,bbmono}}.

\begin{definition}[Morphism of pairs]\label{def:MorphismPairs}
    Let $\Phi:X\rightarrow Y$ be a morphism of finite type from a smooth $\mathbb{K}$-variety $X$ to a smooth $\mathbb{K}$-variety $Y$ with reduced simple normal crossing (SNC) divisors $E$ on $X$ and $F$ on $Y$ such that $\Phi^{-1}(F)_{\text{red}}\subseteq E$. Then $\Phi$ is called a morphism of pairs, denoted by $\Phi:(X,E)\rightarrow (Y,F)$. We will write the dimension of $X$ as $n$ and the dimension of $Y$ as $N$.
\end{definition}

\red{By} $E$-adapted regular (respectively \'{e}tale or formal) coordinates $\left(\vec{u},\vec{v}\right) = (u_1,\ldots,u_s,v_1,\\\ldots,v_{n-s})$ centred at a point $\pa\in X$, we mean local coordinates in $\mathcal{O}_{X,\pa}$ (respectively in $\mathcal{O}_{X,\pa}^h$ or $\widehat{\mathcal{O}}_{X,\pa}$) such that $E$ is locally equal to $\{u_1\cdots u_s=0\}$ with $s\in\mathbb{N}^*$. In this case, we say that $\pa$ is a $s$-point. Furthermore, a polynomial $f\in \mathbb{K}[u_1,\ldots,u_s,v_1,\ldots,v_{n-s}]$ will be called monomial with respect to $E$ if $f\in \mathbb{K}[u_1,\ldots,u_s]$, similarly for $f$ being an algebraic power series or a formal power series. In most cases, we will omit `with respect to $E$'.

For convenience, we will denote $F$-adapted coordinates in the target $S$ via the notation $\left(\vec{x},\vec{y}\right) =(x_1,\ldots,x_t,y_1,\ldots,y_{N-t})$ where $F = \{x_1 \cdots x_t=0\}$.

\begin{definition}[Dominant algebraic monomial morphism, see {\cite[Definition 1.1]{cutkosky2004monomialization}}]\label{AMM}
    Let $\Phi:X\rightarrow Y$ be a dominant morphism between smooth $\mathbb{K}$-varieties. A morphism $\Phi$ is called monomial if $\forall\pa\in X,\exists U$ an étale neighbourhood of $\pa$, $\left(w_1,\ldots,w_n\right)$ uniformising parameters on $U$, regular parameters $z_1,\dots,z_m\in \mathcal{O}_{Y,\Phi\left(\pa\right)}$ and a matrix $\left(a_{ij}\right)$ of non-negative integers with rank $m$ such that
$$
\left\{
\begin{aligned}
    z_1&=w_1^{a_{11}}\ldots w_n^{a_{1n}},\\
    \vdots\\
    z_m&=w_1^{a_{m1}}\ldots w_n^{a_{mn}}.
\end{aligned}
\right.
$$
Note that the $\left(w_1,\ldots,w_n\right)$ and $\left(z_1,\dots,z_m\right)$ are not necessarily centred at $\pa$ and $\Phi\left(\pa\right)$.

More generally, a morphism of pairs $\Phi: (X,E) \to (Y,F)$ is monomial if $\left(w_1,\ldots,w_n\right)$ on $U$ and $z_1,\dots,z_m\in \mathcal{O}_{Y,\Phi\left(\pa\right)}$ are $E$-adapted and $F$-adapted respectively.
\end{definition}

\begin{remark}[On the definition of monomial morphisms, cf. {\cite[Remark 1.2]{bbmono}}]
\label{defbbmono}\hfill
\begin{enumerate}
\item In \cite[Defintion 1.1 and 1.3]{bbmono}, monomial morphisms are defined for non-dominant morphisms. But we only treat dominant morphisms in this \red{paper}, so we prefer to work with Definition \ref{AMM}.
    \item Similarly, the definition of monomial morphisms from \cite[Defintion 1.1 and 1.3]{bbmono} includes the analytic category, which requires one to include translations into the normal form. See details about the relationship between the definitions in \cite[Remark 1.2]{bbmono}.
    \item In the algebraic case, we have the same form as above for $\Phi$. But now $\left(u,v\right)$ should be thought of as uniformising parameters on an étale neighbourhood and $\left(x,y\right)$ as regular parameters exactly as in Definition \ref{AMM}.
\end{enumerate}
\end{remark}

\begin{definition}[Monomialization of morphisms]\label{MOM}
    Let $\Phi:X\rightarrow Y$ be a dominant morphism between smooth $\mathbb{K}$-varieties. Then $\Phi$ admits a monomialisation if there exists a commutative diagram:
    \begin{center}
	\begin{tikzcd}
		X^{\prime} \arrow[r, "\Phi^{\prime}"] \arrow[d, "\sigma"]
		& Y^{\prime} \arrow[d, "\tau"] \\
		X \arrow[r, "\Phi"]
		& Y
	\end{tikzcd}
\end{center}
with $\sigma,\tau$ finite sequences of blowings-up and $\Phi^{\prime}$ a monomial morphism. Moreover, if $X,Y$ are equipped with divisors, we ask that $\sigma,\tau$ adapted to the divisors and $\Phi^{\prime}$ is a morphism of pairs.
\end{definition}

\subsection{Locally principal ideal sheaves}
In this subsection, we consider the notion of locally principal ideal sheaf in our context. It turned out that we only need to require that the ideal sheaf is locally principal on all the closed points.

More precisely, we have the following lemma. It could be understood as a consequence of the bijection of constructible sets between the maximal spectrum and prime spectrum of a finitely generated algebra over a field. In other words, it is due to the correspondence of constructible sets between the classic algebraic variety and the scheme both obtained from a finitely generated algebra over a field.

\begin{lemma}\label{inssup}
    Let $X$ be a scheme of finite type over a field and $\mathcal{I}$ an ideal sheaf of $\mathcal{O}_X$. Then the $\mathcal{O}_X$-module $\mathcal{I}$ is invertible if and only if it is locally free of rank 1 at any closed point of $X$. If this is the case, the ideal sheaf $\mathcal{I}$ is called locally principal.
\end{lemma}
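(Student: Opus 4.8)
The plan is to treat the two implications separately, the forward one being formal and the reverse one requiring a passage from closed points to all points via the Jacobson property.

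First I would dispose of the easy direction: if $\mathcal{I}$ is invertible, then by definition the stalk $\mathcal{I}_x$ is free of rank $1$ over $\mathcal{O}_{X,x}$ at every point $x\in X$, and in particular at every closed point. For the converse, the key structural inputs are that $X$, being of finite type over a field, is both Noetherian and Jacobson. Noetherianity ensures that the ideal sheaf $\mathcal{I}$ is coherent, hence finitely presented. I would then recall the standard fact that, for a finitely presented $\mathcal{O}_X$-module $\mathcal{F}$, the locus where the stalk $\mathcal{F}_x$ is a free module is open: over a local ring a finitely presented flat module is free, and the flat locus of a finitely presented module is open; moreover the fiber rank $x\mapsto\dim_{\kappa(x)}\mathcal{F}_x\otimes_{\mathcal{O}_{X,x}}\kappa(x)$ is locally constant on that locus. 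Consequently, the set
$$U=\{x\in X:\mathcal{I}_x\text{ is free of rank }1\}$$
is open, and in particular constructible.

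The final step uses the Jacobson property: for a scheme of finite type over a field the closed points are very dense, so every nonempty constructible subset contains a closed point of $X$, and a constructible subset is determined by the set of closed points it contains. By hypothesis every closed point of $X$ lies in $U$; since $U$ is constructible and contains all closed points of the Jacobson scheme $X$, its complement is a constructible set with no closed point, hence empty, so $U=X$. Therefore $\mathcal{I}_x$ is free of rank $1$ at every point $x$, i.e. $\mathcal{I}$ is invertible.

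The main obstacle is the reverse implication, and within it the two non-formal ingredients are the openness of the free-rank-one locus (which genuinely requires the finite presentation of $\mathcal{I}$, but does not need $X$ to be reduced) and the correct use of the Jacobson property to promote a condition checked only on closed points to a condition on the whole scheme. Everything else — coherence of $\mathcal{I}$, local constancy of the rank on the free locus, and the trivial direction — is routine.
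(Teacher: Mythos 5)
Your proof is correct, but it takes a somewhat different route from the paper's. The paper reads ``locally free of rank $1$ at a closed point'' as asserting the existence of an open neighbourhood of that point on which $\mathcal{I}$ has rank $1$, and then argues pointwise by specialization: given a non-closed point $\eta$, the closed subset $\overline{\{\eta\}}$ is quasi-compact and so contains a point $x$ closed in $\overline{\{\eta\}}$, hence closed in $X$; any open neighbourhood $U$ of $x$ on which $\mathcal{I}$ has rank $1$ meets $\overline{\{\eta\}}$ and therefore contains its generic point $\eta$. You instead read the hypothesis stalkwise, which forces you to supply a step the paper skips: the openness of the locus $U$ where the stalk is free of rank $1$, which you correctly obtain from coherence and finite presentation (note this implicitly assumes, as the paper also does, that the ideal sheaf is quasi-coherent, so that it is coherent on the Noetherian scheme $X$). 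Your passage from closed points to all points via the Jacobson property is a global version of the paper's specialization argument; it works, but it is heavier than necessary --- since the complement of $U$ is closed, not merely constructible, it suffices to observe, as the paper in effect does, that a nonempty closed subset of a quasi-compact scheme contains a point closed in itself and hence closed in $X$, so Jacobsonness is not actually needed. What your version buys is rigour under the stronger and more standard stalkwise reading of the statement, where the openness step is genuinely required; what the paper's version buys is economy, since under its neighbourhood reading the argument uses neither coherence nor the Jacobson property and applies to any quasi-compact scheme.
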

\begin{proof}
Since $X$ is a locally ringed space, the $\mathcal{O}_X$-module $\mathcal{I}$ is invertible if and only if it is locally free of rank 1. Hence, the direct implication is obvious.

Now we prove the inverse implication. Suppose that $\mathcal{I}$ is locally free of rank 1 at any closed point of $X$. Let $\eta\in X$ be a non-closed point; $\overline{\{\eta\}}$ is an irreducible closed subset of $X$. Take $\overline{\{\eta\}}$ as a closed subscheme of $X$, of which the scheme structure does not matter. It has a closed point since every non-empty quasi-compact scheme has a closed point $x$. The point $x$ is also closed in $X$. Therefore, $\exists U\subset X$ an open neighbourhood of $x$ such that $\mathcal{I}$ is of rank 1 on $U$. On the other hand, $\eta\in U\cap\overline{\{\eta\}}$ as $U\cap\overline{\{\eta\}}$ is an open neighbourhood of $x$ in $\overline{\{\eta\}}$. So $\exists U\subset X$ an open neighbourhood of $\eta$ such that $\mathcal{I}$ is of rank 1.
\end{proof}

For completeness, we present \cite[\href{https://stacks.math.columbia.edu/tag/01WR}{Definition 01WR}]{stacks-project} in the following definition.

\begin{definition}
Let $S$ be a scheme.

A locally principal closed subscheme of $S$ is a closed subscheme whose sheaf of ideals is locally generated by a single element.

An effective Cartier divisor on $S$ is a closed subscheme $D\subset S$ whose ideal sheaf $\mathcal{I}_D\subset \mathcal{O}_S$ is an invertible $\mathcal{O}_S$-module.
\end{definition}

\section{\red{The geometric rank and the Rank Theorem}}
\label{grrt}
We recall several necessary definitions, cf. \cite[1.1. \textbf{Terminology.}]{rond2009homomorphisms}.
\begin{definition}[Homomorphism of local rings]
A homomorphism of local rings $\varphi: A \longrightarrow B$ is a ring homomorphism such that $\varphi\left(\mathfrak{m}_A\right) \subset \mathfrak{m}_B$ and the extension of fields $A / \mathfrak{m}_A \longrightarrow B / \mathfrak{m}_B$ is finite.
\end{definition}

\begin{remark}
	The above definition adds the condition that the extension of residue fields is finite to the usual definition of homomorphism of local rings. This is because that we work with henselian local rings, and henselian local rings are always about finite field extensions.
\end{remark}

\begin{definition}[Local $\mathbb{K}$-algebra]
    A local $\mathbb{K}$-algebra is a finitely generated $\mathbb{K}$-algebra $A$ with a Noetherian local ring structure such that the homomorphism $\mathbb{K} \longrightarrow A$ is injective and the extension of fields $\mathbb{K} \longrightarrow A / \mathfrak{m}_A$ is finite.
\end{definition}

When we say homomorphism of local $\mathbb{K}$-algebras, we refer to homomorphism of the underlying local ring structures of the local $\mathbb{K}$-algebras concerned.

Recall that, roughly speaking, the rank of a homomorphism between classic algebraic varieties at a given point is the rank of the Jacobian matrix evaluated at this point over $\mathbb{K}$. Here we give another kind of rank, the so-called geometric rank. The geometric rank is a nice invariant that provides information on the structure of the homomorphism.

\begin{definition}[Geometric rank of local $\mathbb{K}$-algebras]\label{GRK}
    Let $A,B$ be local $\mathbb{K}$-algebras. The geometric rank of $\varphi: A \longrightarrow B$ is equal to the rank of the $B$-module generated by $\Omega_A^1$ in $\Omega_B^1$. We shall note the geometric rank of $\varphi$ as grk$\left(\varphi\right)$.
\end{definition}

\begin{remark}
Definition \ref{GRK} coincides with the definition given by Jacobian matrix in the classic setting of algebraic varieties.
\end{remark}

Now we could give a slightly more general definition of the rank by using the definition of geometric rank.

\begin{definition}[Rank of local $\mathbb{K}$-algebras]\label{RK}
    Let $A,B$ be local $\mathbb{K}$-algebras. The rank of $\varphi: A \longrightarrow B$ is equal to the rank of the $B / \mathfrak{m}_B$-module generated by $\Omega_A^1\otimes A / \mathfrak{m}_A$ in $\Omega_B^1\otimes B / \mathfrak{m}_B$. We shall note the geometric rank of $\varphi$ as rk$\left(\varphi\right)$.
\end{definition}

Consider $\mathbb{K}$-varieties $X,Y$, and let $\Phi:X\rightarrow Y$ be a morphism. Let $x\in X$ be a closed point; its image $\Phi(x)$ is also a closed point, because $X,Y$ are both schemes of finite type over $\mathbb{K}$. By referring to classical results of localisation and henselisation, one sees that $\mathcal{O}^h_{X,x}$ and $\mathcal{O}^h_{Y,\Phi(x)}$ are both local $\mathbb{K}$-algebras. If $X,Y$ are both smooth, then local rings $\mathcal{O}^h_{X,x}$ and $\mathcal{O}^h_{Y,\Phi(x)}$ are regular. In this case, Defintion \ref{GRK} still coincides with the definition given by Jacobian matrix, so it is locally constant at closed points.

\begin{remark}
    The geometric rank is called generic rank in \cite{belotto2021proof}.
    It can be generalised to positive characteristics as well: cf. \cite[Definition 2.1]{rond2009homomorphisms}.
\end{remark}

Next we give an important lemma on the geometric rank.

\begin{lemma}\label{ins}
    Let $\Phi:X\rightarrow Y$ be a morphism between $\mathbb{K}$-varieties and $E$ an irreducible component of $X$. Then the geometric rank is constant on $E$.
\end{lemma}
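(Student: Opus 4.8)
The plan is to reduce the computation of the geometric rank at a closed point $x\in E$ to a computation at the generic point $\eta$ of $E$, so that the outcome cannot depend on $x$. For a closed point $x$ with image $y=\Phi(x)$, write $\varphi_x\colon \mathcal{O}^h_{Y,y}\to \mathcal{O}^h_{X,x}$ for the induced homomorphism of local $\mathbb{K}$-algebras, so that $\mathrm{grk}(\varphi_x)$ is the geometric rank at $x$ in the sense of Definition \ref{GRK}. Let $\mathcal{M}\subseteq \Omega^1_{X/\mathbb{K}}$ denote the image of the canonical map $\Phi^*\Omega^1_{Y/\mathbb{K}}\to \Omega^1_{X/\mathbb{K}}$, a coherent $\mathcal{O}_X$-module. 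I would first record two elementary compatibilities. The formation of $\Omega^1$ commutes with henselisation: since $\mathcal{O}_{X,x}\to \mathcal{O}^h_{X,x}$ is a filtered colimit of étale algebras it is flat and $\Omega^1_{\mathcal{O}^h_{X,x}/\mathbb{K}}=\mathcal{O}^h_{X,x}\otimes_{\mathcal{O}_{X,x}}\Omega^1_{\mathcal{O}_{X,x}/\mathbb{K}}$, and similarly for $Y$ at $y$. Moreover, forming the image of a morphism of modules commutes with any flat base change, as flatness preserves both kernels and cokernels. Combining the two, the $\mathcal{O}^h_{X,x}$-submodule of $\Omega^1_{\mathcal{O}^h_{X,x}/\mathbb{K}}$ generated by $\Omega^1_{\mathcal{O}^h_{Y,y}/\mathbb{K}}$ --- whose rank is $\mathrm{grk}(\varphi_x)$ --- is exactly $\mathcal{M}_x\otimes_{\mathcal{O}_{X,x}}\mathcal{O}^h_{X,x}$.

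Next I would use that the generic rank of a finitely generated module, i.e. its dimension at the minimal primes, is preserved under faithfully flat base change: minimal primes of the target contract to minimal primes of the source by going-down, and tensoring by the corresponding residue-field extensions preserves dimension. Applying this to $\mathcal{O}_{X,x}\to \mathcal{O}^h_{X,x}$, the rank along $E$ of $\mathcal{M}_x\otimes \mathcal{O}^h_{X,x}$ equals the rank of $\mathcal{M}_x$ at the minimal prime $\mathfrak{p}_E$ of the component $E$. Finally, localising $\mathcal{M}_x$ at $\mathfrak{p}_E$ yields the stalk $\mathcal{M}_\eta$ at the generic point of $E$; since a $\mathbb{K}$-variety is reduced (Definition \ref{kv}), $\mathcal{O}_{X,\eta}=\kappa(\mathfrak{p}_E)=K(E)$ is the function field of $E$, whence
\[
\mathrm{grk}(\varphi_x)=\dim_{K(E)}\mathcal{M}_\eta .
\]
The right-hand side does not involve $x$ at all; writing $\xi=\Phi(\eta)$ and $Z=\overline{\{\xi\}}=\overline{\Phi(E)}$ with function field $K(Z)$, it is the dimension over $K(E)$ of the image of $\Omega^1_{K(Z)/\mathbb{K}}\otimes_{K(Z)}K(E)\to \Omega^1_{K(E)/\mathbb{K}}$, an invariant of the dominant morphism $E\to Z$ alone. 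This proves the lemma.

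The only genuine obstacle is the bookkeeping around henselisation, since $\mathcal{O}^h_{X,x}$ may carry several minimal primes over $\mathfrak{p}_E$ (the analytic branches of $E$ at $x$) in addition to primes coming from other components of $X$ passing through $x$. I would handle this by reading the rank in Definition \ref{GRK} as the generic rank along the branches dominating $E$, and by observing, via the flat base change computation above, that each such branch contributes the same number $\dim_{K(E)}\mathcal{M}_\eta$, its residue field being a field extension of $K(E)$ over which vector-space dimensions are unchanged. Once this is settled, no further geometric input is required: the identification with the generic-point computation is forced by flatness of localisation and henselisation together with the reducedness of $X$.
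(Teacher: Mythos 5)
Your proof is correct, but it takes a genuinely different route from the paper's. The paper argues topologically: it invokes the earlier remark that, for $X,Y$ smooth, the geometric rank computed via the Jacobian matrix is locally constant at closed points, and then uses irreducibility of $E$ (any two non-empty open subsets of $E$ meet) to glue the local constants into a global one. You instead argue algebraically and identify the constant value: using that $\Omega^1$ commutes with the ind-\'etale, flat henselisation and that images of module maps commute with flat base change, you identify the module in Definition \ref{GRK} with the henselised stalk of the coherent sheaf $\mathcal{M}=\operatorname{im}\left(\Phi^*\Omega^1_Y\to\Omega^1_X\right)$, and then transport its generic rank down the faithfully flat map $\mathcal{O}_{X,x}\to\mathcal{O}^h_{X,x}$ and through localisation at $\mathfrak{p}_E$, obtaining $\operatorname{grk}_x(\Phi)=\dim_{K(E)}\mathcal{M}_\eta$ for \emph{every} closed $x\in E$; the individual steps check out (going-down for flat maps sends minimal primes to minimal primes, and dimension is unchanged under the residue-field extension $\kappa(\mathfrak{p}_E)\to\kappa(\mathfrak{q})$). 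Your approach buys three things: it proves, rather than cites, the constancy; it dispenses with the covering/connectedness argument (irreducibility enters only through the existence of the single generic point $\eta$); and the intrinsic value $\dim_{K(E)}\mathcal{M}_\eta$, an invariant of the induced dominant map $E\to\overline{\Phi(E)}$, is exactly what underlies the paper's subsequent Remark extending the geometric rank to non-closed points, where well-definedness otherwise requires a separate check. The paper's argument buys brevity, at the cost of resting on an unproved local-constancy remark. One caution, which you rightly flag yourself: when $x$ lies on several components, or $\mathcal{O}^h_{X,x}$ has several minimal primes over $\mathfrak{p}_E$, the ``rank'' in Definition \ref{GRK} is ambiguous and needs the reading you give it (generic rank along the branches dominating $E$); note that the paper's own proof silently restricts to the smooth case, where $\mathcal{O}_{X,x}$ and $\mathcal{O}^h_{X,x}$ are domains and the ambiguity disappears, so your branch discussion in fact covers strictly more than the paper's argument does.
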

\textit{Proof.} As remarked above, for $X,Y$ smooth $\mathbb{K}$-varieties, the definition of geometric rank is locally constant at closed points. Hence, $\forall x\in E$ closed, we could find an open neighbourhood $U_x\subset E$ of $x$ such that the geometric rank is constant on $U_x$. Note that $E$ consists of closed points and a unique generic point $\eta$ with $\eta\in U_x$. Let $x^{\prime}\neq x\in E$ be another closed point; we have $U_x\cap U_{x^{\prime}}\neq\emptyset$ since $E$ is irreducible, so the geometric rank is constant over $\cup_{x\in E\text{ closed}}U_x=E$.
\qed

\begin{remark}
    Note that, $\forall i$, the geometric rank of the unique generic point $\eta$ of $E_i$ is defined in the course of the proof of Lemma \ref{ins}.
    
    Inspired by the proof of Lemma \ref{ins} and the proof of Lemma \ref{inssup}, we expand the definition of geometric rank to non-closed points of a smooth $\mathbb{K}$-variety $X$ as follows. Let $\eta\in X$ be a non-closed point. $\overline{\{\eta\}}$ is an irreducible closed subset of $X$. Take $\overline{\{\eta\}}$ as a closed subscheme of $X$, of which the scheme structure does not matter. It has a closed point $x$, since every non-empty quasi-compact scheme has a closed point. The geometric rank at $\eta$ is defined as the geometric rank at $x$. One can check that it is well-defined, i.e. $\forall x^{\prime}\neq x\in X$ a closed point and $\forall U_{x^{\prime}}\subset X$ an open neighbourhood of $x^{\prime}$ containing $\eta$ over which the geometric rank is constant, the geometric rank of $\eta$ defined by $x^{\prime}$ is the same as that defined by $x$.
\end{remark}

Recall Chevalley's Theorem, cf. \cite[Corollaire (I.8.5)]{grothendieck1964elements}:
\begin{theorem}[\red{Chevalley}]
    Let $\mathrm{Y}$ be a Noetherian scheme and $f: \mathrm{X} \rightarrow \mathrm{Y}$ a morphism of finite type. For all constructible subsets $\mathrm{Z}$ of $\mathrm{X}, f(\mathrm{Z})$ is a constructible subset of $\mathrm{Y}$.
\end{theorem}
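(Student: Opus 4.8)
The plan is to prove the theorem by a chain of standard reductions that isolates a single concrete computation over the affine line. First I would note that, since a constructible subset is a finite union of locally closed subsets and since taking images commutes with unions, it suffices to treat the image $f(\mathrm{X})$ of the whole source: any constructible $\mathrm{Z}$ can be given the structure of a locally closed subscheme, and the composite $\mathrm{Z}\hookrightarrow \mathrm{X}\xrightarrow{f}\mathrm{Y}$ is again a morphism of finite type, so replacing $\mathrm{X}$ by $\mathrm{Z}$ reduces us to showing that $f(\mathrm{X})$ is constructible. Constructibility being local on the target, I may then assume $\mathrm{Y}=\operatorname{Spec} A$ with $A$ Noetherian; since $\mathrm{X}$ is quasi-compact I cover it by finitely many affine opens $\operatorname{Spec} B_i$, so that $f(\mathrm{X})=\bigcup_i f(\operatorname{Spec} B_i)$ and it suffices to treat $\mathrm{X}=\operatorname{Spec} B$ with $B$ a finitely generated $A$-algebra.

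Next I would factor the structural map as $A\to A[t_1,\dots,t_n]\twoheadrightarrow B$. The induced morphism $\operatorname{Spec} B\to \operatorname{Spec} A[t_1,\dots,t_n]$ is a closed immersion, so its image is closed and in particular constructible; writing the projection $\mathbb{A}^n_A\to\operatorname{Spec} A$ as a composite of $n$ one-variable projections and inducting on $n$, I reduce the whole problem to the following statement: the image of a constructible subset under the projection $\pi\colon \operatorname{Spec} A[t]\to\operatorname{Spec} A$ is constructible. This one-variable case is the heart of the matter.

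For the projection in one variable I would argue by Noetherian induction on the closed subsets of $\operatorname{Spec} A$. Decomposing into irreducible components and passing to reduced induced structures reduces us to the case where $A$ is a Noetherian domain with fraction field $K$. Given a constructible $C\subseteq \operatorname{Spec} A[t]$, I would inspect its generic fibre $C_K\subseteq\operatorname{Spec} K[t]$; because $K[t]$ is a principal ideal domain, the constructible subsets of $\operatorname{Spec} K[t]$ are transparent (finite sets of closed points, or cofinite sets containing the generic point). If $C_K=\emptyset$ then $C$ is supported over a proper closed subset of $\operatorname{Spec} A$ and the inductive hypothesis applies directly. If $C_K\neq\emptyset$, a clearing-denominators (``spreading out'') argument produces a nonzero $a\in A$ with $D(a)\subseteq \pi(C)$; applying the inductive hypothesis to the closed complement $V(a)$ and taking the union then finishes the argument.

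The hard part will be precisely this last step: organising the Noetherian induction cleanly and executing the descent from the generic fibre over $K$ back to a localisation $A_a$. The delicate point is to guarantee that, when the generic fibre meets $C$, the image contains an \emph{entire} nonempty open set rather than merely the generic point, which forces one to control denominators uniformly across the finitely many polynomial conditions cutting out $C$. Once this spreading-out is secured, the bookkeeping of the induction and all of the earlier reductions are routine.
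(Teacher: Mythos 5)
The paper does not actually prove this statement: it is recalled verbatim as a classical result, with a citation to EGA (Corollaire I.8.5 of \cite{grothendieck1964elements}), and is then used as a black box in the proofs of Proposition \ref{rankthm} and Lemma \ref{lem:BasicQuasiPrepared}. So there is no ``paper proof'' to match yours against; what you have written is, in essence, the standard textbook proof (the one in EGA itself, and in the Stacks Project), and its architecture is correct: images commute with unions, so one may replace $X$ by a locally closed piece of $Z$ with its reduced subscheme structure (an immersion into a Noetherian scheme is of finite type, so the composite still is); constructibility is local on a Noetherian target, reducing to $Y=\operatorname{Spec} A$ affine and, by quasi-compactness of $X$, to $X=\operatorname{Spec} B$; the factorisation $A\to A[t_1,\dots,t_n]\twoheadrightarrow B$ and induction on $n$ reduce everything to the one-variable projection $\pi\colon\operatorname{Spec} A[t]\to\operatorname{Spec} A$, where Noetherian induction plus the generic-fibre analysis over $K=\operatorname{Frac}(A)$ finish the argument.

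Two points in the final step deserve to be made explicit, since you flag them as the hard part. First, in the case $C_K=\emptyset$: write $C=\bigcup_i (Z_i\cap U_i)$ with $Z_i$ irreducible closed; the generic point of each $Z_i$ lies in $C$, hence cannot map to the generic point $\eta$ of $\operatorname{Spec} A$, so $\pi(C)\subseteq\bigcup_i\overline{\pi(Z_i)}$ is indeed contained in a proper closed subset, and the induction applies. Second, the spreading-out when $C_K\neq\emptyset$ splits into two genuinely different sub-cases. If $C_K$ contains the generic point of $\operatorname{Spec} K[t]$, then that point is the generic point of $\operatorname{Spec} A[t]$ itself, so $C$ contains a nonempty open, hence some $D(g)$ with $g\in A[t]$ nonzero; for any nonzero coefficient $a$ of $g$ and any $\mathfrak p\in D(a)$, the prime $\mathfrak p A[t]$ avoids $g$, giving $D(a)\subseteq\pi(C)$. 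If instead $C_K$ is a finite set of closed points, pick $\mathfrak q\in C$ over $\eta$, set $Z=\overline{\{\mathfrak q\}}$ and $B'=A[t]/\mathfrak q$: this is a domain with $B'\otimes_A K$ finite over $K$, so after inverting a suitable $a_0\in A$ the extension $A_{a_0}\to B'_{a_0}$ is finite and injective, hence $\pi(Z)\supseteq D(a_0)$ by lying over; one further shrinking handles the open piece $Z\cap U\subseteq C$, because $Z\setminus U$ misses the one-point generic fibre of $Z$ and therefore maps into a proper closed subset $V(b)$, yielding $D(a_0b)\subseteq\pi(C)$. With these two lemmas supplied, the uniform control of denominators you worried about is exactly the finiteness of the data ($g$, respectively $a_0$ and $b$), and the proof closes as you describe via $\pi(C)=D(a)\cup\pi\bigl(C\cap\pi^{-1}(V(a))\bigr)$ and Noetherian induction.
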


Recall that $X,Y$ are both schemes of finite type over $\mathbb{K}$, so the image of a closed point is a closed point. On the other hand, closed points of $X,Y$ are of the form $V\left(z_1-\lambda_1,\ldots,z_n-\lambda_n\right)$ for $\lambda_1,\ldots,\lambda_n\in\mathbb{K}$. Consequently, the sets of closed points of $X,Y$ are non-empty.

\begin{proposition}\label{rankthm}
    Let $\mathbb{K}$ be an algebraically closed field and $f: X \longrightarrow Y$ a morphism of finite type between $\mathbb{K}$-varieties. Note $Z=\overline{f(X)}$, the Zariski closure of $f(X)$. If max$\{\text{grk}_x(f)\}_{\{x\in X\text{ closed}\}}=r\in\mathbb{N}$, then dim $f(X)=$dim$Z=r$.
\end{proposition}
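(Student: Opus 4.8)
The plan is to reduce the statement to a computation of the geometric rank at the generic point of each irreducible component of $X$, where the equality $r=\dim Z$ becomes a separability statement about function fields.

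First I would dispose of the easy half $\dim f(X)=\dim Z$. By Chevalley's Theorem, $f(X)$ is a constructible subset of $Y$; since a constructible set has the same dimension as its Zariski closure, and $Z=\overline{f(X)}$ by definition, the equality is immediate. I would then reduce to the irreducible, dominant case. Writing $X=\bigcup_i X_i$ for the decomposition into irreducible components, we have $f(X)=\bigcup_i f(X_i)$ and $Z=\bigcup_i \overline{f(X_i)}$, so $\dim Z=\max_i \dim\overline{f(X_i)}$; by Lemma \ref{ins} the geometric rank is constant equal to some $r_i$ on $X_i$, whence $r=\max_i r_i$. Thus it suffices to prove $\dim\overline{f(X_i)}=r_i$ for each fixed $i$. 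Replacing $X$ by $X_i$ and $Y$ by $\overline{f(X_i)}$ does not change the geometric rank: at a closed point $x$ the morphism factors through the closed subscheme $\overline{f(X_i)}$, the induced map $\Omega^1_{\mathcal{O}_{Y,f(x)}}\to \Omega^1_{\mathcal{O}_{\overline{f(X_i)},f(x)}}$ is surjective, and both generate the same $\mathcal{O}^h_{X,x}$-submodule of $\Omega^1_{\mathcal{O}^h_{X,x}}$. We are therefore reduced to a dominant morphism $f:X\to Z$ of irreducible $\mathbb{K}$-varieties.

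The crux is then to compute $r$ at the generic point $\eta$ of $X$. Using Lemma \ref{ins} together with the extension of the geometric rank to non-closed points described in the remark following it, $r$ equals the geometric rank at $\eta$, that is, the rank of the $K(X)$-module generated by the image of $\Omega^1_{K(Z)/\mathbb{K}}$ in $\Omega^1_{K(X)/\mathbb{K}}$. The inequality $r\le\dim Z$ is formal, since this image has rank at most $\dim_{K(Z)}\Omega^1_{K(Z)/\mathbb{K}}=\mathrm{trdeg}_{\mathbb{K}}K(Z)=\dim Z$, the first equality holding because $K(Z)/\mathbb{K}$ is separably generated in characteristic zero. For the reverse inequality I would invoke the cotangent exact sequence associated with $\mathbb{K}\subset K(Z)\subset K(X)$:
\[
\Omega^1_{K(Z)/\mathbb{K}}\otimes_{K(Z)}K(X)\xrightarrow{\ \alpha\ }\Omega^1_{K(X)/\mathbb{K}}\to\Omega^1_{K(X)/K(Z)}\to 0.
\]
As $\mathbb{K}$ has characteristic zero, the finitely generated field extension $K(X)/K(Z)$ is separable, hence formally smooth, so $\alpha$ is injective; therefore the image of $\Omega^1_{K(Z)/\mathbb{K}}$ has full rank $\dim_{K(Z)}\Omega^1_{K(Z)/\mathbb{K}}=\dim Z$. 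Combining the two inequalities yields $r=\dim Z$, which closes the argument.

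The main obstacle I anticipate is not the separability input itself but reconciling the intrinsic definition of geometric rank at closed points (formulated via the henselian local rings $\mathcal{O}^h_{X,x}$ and $\mathcal{O}^h_{Y,f(x)}$) with the function-field computation at $\eta$. Concretely, one must verify that the formation of $\Omega^1$ is compatible with localisation and henselisation, so that the geometric rank is the pointwise rank of the coherent map $f^{*}\Omega^1_Y\to\Omega^1_X$, that this rank attains its generic value along the (irreducible) source by Lemma \ref{ins}, and that the generic value is exactly the one computed above. It is precisely the separability of $K(X)/K(Z)$ that upgrades the trivial bound $r\le\dim Z$ into the equality, so I expect this injectivity of $\alpha$ to be the essential point to justify carefully.
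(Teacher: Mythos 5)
Your argument is correct, but it takes a genuinely different route from the paper's. The paper never passes to generic points or function fields: after Chevalley's Theorem and the Noetherian decomposition of $f(X)$ (the same reasoning as your ``easy half''), it derives \emph{both} inequalities $\dim Z\leq r$ and $\dim Z\geq r$ from Rond's local result \cite[Theorem 6.7]{rond2009homomorphisms} --- if $\mathrm{grk}_{x'}(f)=r'$ at a closed point $x'$, then $Z$ has dimension $r'$ in a neighbourhood of $f(x')$ --- applied first at a closed point of the fibre $f^{-1}(y)$ over a well-chosen closed point $y$ of each component of $Z$ (giving $\leq$), then at a closed point realising the maximum $r$ (giving $\geq$). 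You instead reduce to an irreducible dominant $f\colon X\to Z$ and re-prove by hand exactly the special case of Rond's theorem that is needed: the injectivity of $\alpha$ in the cotangent sequence, via separability (formal smoothness) of $K(X)/K(Z)$, is the classical characteristic-zero computation. What each approach buys: yours is self-contained, replacing the black-box citation by standard commutative algebra, but it is intrinsically characteristic-zero (the route through \cite{rond2009homomorphisms} is built to extend to positive characteristic, as the paper remarks), and it carries the burden you correctly flag --- identifying the paper's $\mathrm{grk}$, defined via henselian local rings at closed points and extended to non-closed points only through the remark after Lemma \ref{ins}, with the rank of $f^*\Omega^1_Y\to\Omega^1_X$ at the generic point. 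That verification is routine provided you evaluate at a smooth closed point of $X_i$ lying on no other component, where $\mathcal{O}^h_{X,x}$ is a domain and $\Omega^1$ commutes with localisation and with the ind-\'etale extension $\mathcal{O}_{X,x}\to\mathcal{O}^h_{X,x}$; note also that your step $r=\max_i r_i$ uses Lemma \ref{ins} at points lying on several components, where the notion of rank over a non-domain henselisation is delicate and the paper's proof of that lemma really concerns the smooth case --- this is harmless for you, since the maximum is attained at general points of the components, but it deserves a sentence of justification.
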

\begin{proof} By Chevalley's Theorem, $f(X)$ is constructible. Since $Y$ is a Noetherian scheme, the underlying topological space $|Y|$ is Noetherian. Then $f(X)=\cup_{i\in I} F_i \cap U_i$, where $I$ is finite; note that $\left(F_i\right)_{i\in I}$ are closed subsets of $Y$ and $\left(U_i\right)_{i\in I}$ are open subsets of $Y$. By decomposing each $F_i$ as a Noetherian topological space to a union of irreducible closed subsets, we can suppose that each $F_i$ is irreducible. Every $F_i \cap U_i$ is non-empty, hence dense in $F_i$. So $Z=\cup_{i\in I} F_i$. There is a unique subset $J\subseteq I$ such that each $F_j$ is an irreducible component of $Z$ and $F_j,j\in J$ are not pairwise comparable under inclusion.

If $k$ is a field and $X$ is a $k$-scheme of finite type, all subschemes of $X$ are of finite type over $k$. Thus, the closed subscheme $Z$ is a scheme of finite type over $\mathbb{K}$. Consequently, $Z$ is Noetherian, and the dimension of $Z$ is the maximum of the dimensions of its irreducible components. Note that $f(X)$ is a non-empty dense open subset in $Z$, so dim$Z=$dim$f(X)$. Therefore, it suffices to prove that dim$Z=r$.

Write $Z=\cup_{j\in J} F_j$. Dimension is defined as a topological property, which is not influenced by the algebraic structure in question. Hence, $\forall j\in J$, put the reduced closed subscheme structure on $F_j$; then $F_j$ is an integral scheme of finite type over $\mathbb{K}$, and thus dim$F_j$=dim$\mathcal{O}_{F_j,P}$ for any closed point $P$ of $F_j$. However, closed points are dense in $F_j$, so $F_j\cap U_j$ contains a closed point $y\in Y$. Consider $f^{-1}(y)$, a non-empty closed subset of $X$: it has a closed point $x^{\prime}$ since every non-empty quasi-compact scheme has a closed point. If grk$_{x^{\prime}}(f)=r^{\prime}$, $\exists U^{\prime}$ an open neighbourhood of $y$ in $Y$ such that $Z$ is of dimension $r^{\prime}$ in $U^{\prime}$ by {[Theorem 6.7]\cite{rond2009homomorphisms}}. It follows that $F_j$ is of dimension less than or equal to $r^{\prime}$ in $U^{\prime}$, and dim$\mathcal{O}_{F_j\cap U^{\prime},y}\leq r^{\prime}$. Therefore, dim$\mathcal{O}_{F_j,y}$=dim$\mathcal{O}_{F_j\cap U^{\prime},y}\leq r^{\prime}$. Therefore, we have dim$Z\leq r$.

Let $x$ be a closed point in $X$ such that grk$_x(f)=r$, $f(x)$ is closed in $Y$ and $f(x)\in F_{i_0}\cap U_{i_0}\subseteq Z$ for some $i_0\in I$. $\exists U$ an open neighbourhood of $f(x)$ in $Y$ such that $Z$ is of dimension $r$ in $U$ by {[Theorem 6.7]\cite{rond2009homomorphisms}}. Note that $\cup_{z\in Z}U_z\cup U$ is an open covering of $Z$. Closed points are dense in $Z$, so every $U_z$ contains a closed point. By a similar argument as that in the last paragraph, dim$Z\leq r$ in every $U_z$. Therefore, dim$Z=r$.
\end{proof}

\section{\red{Quasi-prepared morphisms}}
\begin{definition}[Quasi-prepared morphism, see {\cite[Definition 3.1]{cutkosky2004monomialization}}]\label{def:QuasiPrepared}
    Consider a morphism of pairs (not necessarilly dominant) $\Phi:(X,E)\rightarrow (Y,F)$ and let
    \[
    \mbox{Sing} \left(\Phi\right):= \{\pa\in X ; \, \mbox{rk}_{\pa}(\Phi)< \mbox{dim}Y =  N\}
    \]
    be the locus of singular points of $\Phi$. We will say that $\Phi$ is quasi-prepared if Sing$\left(\Phi\right)\subseteq E$ and $\Phi^{-1}(F)_{\text{red}}=E$. 
\end{definition}

\begin{lemma}[Reduction to quasi-prepared morphism]\label{lem:BasicQuasiPrepared}
Let $\Phi: X \to Y$ be a dominant morphism between $\mathbb{K}$-varieties with $n\geq N$ (as usual, $n,N$ denote the dimensions of $X$ and $Y$ respectively) and reduced SNC divisors $E$ on $X$, $F$ on $Y$, such that $\overline{\Phi\left(E\cup\text{Sing}(X)\right)}$ has codimension $\geq 1$.
Then there exist finite sequences of blowings-up $\sigma: (X',E')\to (X,E), \tau: (Y',F') \to (Y,F)$, and a dominant quasi-prepared morphism $\Phi':(X',E') \to (Y',F')$ such that the following diagram commutes:
\begin{center}
	\begin{tikzcd}
		X^{\prime} \arrow[r, "\Phi^{\prime}"] \arrow[d, "\sigma"]
		& Y^{\prime} \arrow[d, "\tau"] \\
		X \arrow[r, "\Phi"]
		& Y
	\end{tikzcd}
\end{center}
\end{lemma}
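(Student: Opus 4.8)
The plan is to realise the two defining conditions of Definition \ref{def:QuasiPrepared} — namely $\mathrm{Sing}(\Phi')\subseteq E'$ and $(\Phi')^{-1}(F')_{\mathrm{red}}=E'$ — by arranging that every point of the source at which the rank of the morphism drops below $N$, together with every point of $E'$, is sent into the target divisor $F'$, and then by \emph{defining} $E'$ to be $(\Phi')^{-1}(F')_{\mathrm{red}}$. First I would reduce to the smooth setting. Applying embedded resolution of singularities to $Y$ (fixing a dense open over which it is an isomorphism), resolving the induced rational map $X\dashrightarrow Y$, and resolving the singularities of $X$, one obtains smooth $\mathbb{K}$-varieties, SNC divisors, and an honest dominant morphism lifting $\Phi$; the exceptional divisors produced are adjoined to $E$. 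Here the hypothesis that $\overline{\Phi(E\cup \mathrm{Sing}(X))}$ has codimension $\ge 1$ is exactly what guarantees that, after this reduction, no component of the enlarged $E$ dominates $Y$, i.e.\ that $\overline{\Phi(E)}$ still has codimension $\ge 1$. From now on I assume $X,Y$ smooth, $E,F$ SNC, and $\overline{\Phi(E)}$ of codimension $\ge 1$.

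Next I would control the singular locus in the target. Since $X$ is smooth, $\mathrm{Sing}(\Phi)=\{\pa : \mathrm{rk}_{\pa}(\Phi)<N\}$ is precisely the non-smooth locus of $\Phi$, a proper closed subset because $\Phi$ is dominant, so that its generic geometric rank equals $N$ by Proposition \ref{rankthm}. By generic smoothness in characteristic zero — which in the present algebraic setting over $\mathbb{K}$ is captured by Proposition \ref{rankthm} applied to $\Phi$ and to its restriction to $\mathrm{Sing}(\Phi)$ — the image $\overline{\Phi(\mathrm{Sing}(\Phi))}$ has codimension $\ge 1$ in $Y$. I then set $W:=F\cup\overline{\Phi(E)}\cup\overline{\Phi(\mathrm{Sing}(\Phi))}$, a closed subset of $Y$ of codimension $\ge 1$.

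Finally I would push $W$ into an SNC divisor and transport the data back to the source. By principalisation there is a finite sequence of blowings-up $\tau:(Y',F')\to (Y,F)$, an isomorphism over $Y\setminus W$, with $Y'$ smooth and $F'$ an SNC divisor containing the total transform of $W$. Resolving the indeterminacy of $X\dashrightarrow Y'$ and then $X'$ itself produces $\sigma:X'\to X$ and a dominant morphism $\Phi':X'\to Y'$; I set $E':=(\Phi')^{-1}(F')_{\mathrm{red}}$ and perform further blowings-up with centres contained in $E'$ to render it SNC, noting that such centres have image inside $F'$, so this does not disturb the identity $E'=(\Phi')^{-1}(F')_{\mathrm{red}}$. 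Thus $(\Phi')^{-1}(F')_{\mathrm{red}}=E'$ holds by construction. For the inclusion $\mathrm{Sing}(\Phi')\subseteq E'$, observe that over $Y'\setminus F'\subseteq Y'\setminus \tau^{-1}(W)$ both $\tau$ and $\sigma$ are isomorphisms, and $\Phi'$ there agrees with $\Phi$ over $Y\setminus W$, where $\Phi$ is smooth; hence $\Phi'$ has rank $N$ off $(\Phi')^{-1}(F')$, which yields $\mathrm{Sing}(\Phi')\subseteq (\Phi')^{-1}(F')_{\mathrm{red}}=E'$.

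The step I expect to be the main obstacle is the rank bookkeeping of the middle argument: proving that the critical values $\overline{\Phi(\mathrm{Sing}(\Phi))}$ genuinely have codimension $\ge 1$ over an arbitrary algebraically closed field of characteristic zero, and that neither the resolution of indeterminacy nor the blowings-up used to make $E'$ SNC create new rank drops over $Y'\setminus F'$. This is exactly where the notion of geometric rank and Proposition \ref{rankthm} (following Rond) must replace the classical differential-geometric Rank Theorem.
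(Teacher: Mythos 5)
Your proposal is correct and takes essentially the same route as the paper: resolve source and target, note that the relevant images (of $E$ and of the critical locus, the latter by dominance in characteristic zero via Proposition \ref{rankthm}) have codimension $\geq 1$, push them into an SNC divisor in the target by embedded resolution, and principalise the pulled-back ideal of $F'$ so that $E'=(\Phi')^{-1}(F')_{\mathrm{red}}$. The only variation is ordering: the paper first blows up $\mathrm{Sing}(\Phi)$ in the source to force $\mathrm{Sing}(\Phi)\subseteq E$ and then pushes $\overline{\Phi(E)}$ into $F$, whereas you absorb $\overline{\Phi(\mathrm{Sing}(\Phi))}$ directly into the target divisor $W$ and recover $\mathrm{Sing}(\Phi')\subseteq E'$ from smoothness of $\Phi$ over $Y\setminus W$ --- and your version has the small merit of making explicit the Sard-type codimension bound on the critical values that the paper's ``by hypothesis'' glosses over.
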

\begin{proof} By resolution of singularities, we can assume that $Y$ is smooth. Again by the resolution of singularities and its functoriality, we can assume that $X$ is smooth. The functoriality of the resolution of singularities ensures the existence of $\Phi^{\prime}$.

By further blowing up $\mbox{Sing}\left(\Phi\right)$, we can assume that $\mbox{Sing}\left(\Phi\right)\subseteq E$.

Let us consider $Z$, the Zariski closure of $\Phi(E)$. It is a proper subscheme of $Y$ by hypothesis. Thus, we may suppose $\Phi(E)\subseteq F$ by resolution of singularities of $Z$ over $Y$. Moreover we have $\left(X,E\right)\rightarrow\left(Y,F\right)$ with $\Phi^{-1}(F)=E$ by resolving $\mathcal{I}_F$, the ideal sheaf of $F$.

All relevant modifications are blowings-up, so $\Phi^{\prime}$ is still dominant. Blowing-up is projective for $\mathbb{K}$-varieties. Hence, a $\mathbb{K}$-variety remains a $\mathbb{K}$-variety after blowings-up. Hence, $X,Y$ are still $\mathbb{K}$-varieties after all the modifications in this proof. Furthermore, $\Phi^{\prime}$ is of finite type because $\sigma\circ\Phi$ is locally of finite type and $X^{\prime}$ is a Noetherian scheme.
\end{proof}

\begin{lemma}[Stability of quasi-prepared morphisms by blowings-up]\label{lem:PreserveQuasi-prepared}
    Let $\Phi:(X,E)\rightarrow (Y,F)$ be a quasi-prepared morphism of pairs. Suppose that $\sigma:(X',E') \to (X,E)$ is a blowing-up with centre contained in $E$. Then $\Phi'= \Phi \circ \sigma$ is quasi-prepared.
\end{lemma}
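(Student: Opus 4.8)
The plan is to verify directly the two defining conditions of a quasi-prepared morphism for $\Phi' = \Phi \circ \sigma$, namely $\mathrm{Sing}(\Phi') \subseteq E'$ and $(\Phi')^{-1}(F)_{\mathrm{red}} = E'$, exploiting the single geometric fact that $\sigma$ is an isomorphism away from its centre. Throughout I use that, as $\sigma$ is a blowing-up adapted to the divisor, the new boundary is the total transform $E' = \sigma^{-1}(E)_{\mathrm{red}}$. Since the centre $C$ of $\sigma$ is contained in $E$, the exceptional divisor satisfies $\mathrm{Exc}(\sigma) \subseteq \sigma^{-1}(C)_{\mathrm{red}} \subseteq E'$, and $\sigma$ restricts to an isomorphism $X' \setminus E' \xrightarrow{\sim} \sigma(X' \setminus E') \subseteq X \setminus E$; indeed any point of $X'$ mapping into $E$ lies in $\sigma^{-1}(E) \subseteq E'$, and $X \setminus E \subseteq X \setminus C$ because $C \subseteq E$.

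For the singular locus, I would take an arbitrary point $\pa' \in X' \setminus E'$ and set $\pa = \sigma(\pa') \in X \setminus E$. As $\sigma$ is a local isomorphism at $\pa'$, it induces an isomorphism of the henselian local rings $\mathcal{O}^h_{X,\pa} \cong \mathcal{O}^h_{X',\pa'}$ compatible with $\Phi$ and with $\Phi' = \Phi \circ \sigma$; hence, by the definition of rank (Definition \ref{RK}), the $\mathcal{O}_X$-modules of Kähler differentials entering the computation are identified and $\mathrm{rk}_{\pa'}(\Phi') = \mathrm{rk}_{\pa}(\Phi)$. Since $\Phi$ is quasi-prepared and $\pa \notin E$, we have $\pa \notin \mathrm{Sing}(\Phi)$, i.e. $\mathrm{rk}_{\pa}(\Phi) = N$, so $\mathrm{rk}_{\pa'}(\Phi') = N$ and $\pa' \notin \mathrm{Sing}(\Phi')$. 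Taking contrapositives gives $\mathrm{Sing}(\Phi') \subseteq E'$.

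For the boundary condition, I would compute the scheme-theoretic preimage $(\Phi')^{-1}(F) = \sigma^{-1}\bigl(\Phi^{-1}(F)\bigr)$. Passing to reduced structures is a purely set-theoretic operation, so the underlying closed set of $(\Phi')^{-1}(F)_{\mathrm{red}}$ coincides with $\sigma^{-1}$ of the underlying set of $\Phi^{-1}(F)_{\mathrm{red}}$. Using that $\Phi$ is quasi-prepared, $\Phi^{-1}(F)_{\mathrm{red}} = E$, this set equals $\sigma^{-1}(E)$, whose reduced structure is exactly $E' = \sigma^{-1}(E)_{\mathrm{red}}$; hence $(\Phi')^{-1}(F)_{\mathrm{red}} = E'$. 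In particular $\Phi'$ is a morphism of pairs and both quasi-prepared conditions are satisfied.

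I expect no serious obstacle: the statement is essentially bookkeeping resting on the fact that a blowing-up is an isomorphism over the complement of its centre, together with the convention $E' = \sigma^{-1}(E)_{\mathrm{red}}$. The only point requiring mild care is the invariance of the rank under the local isomorphism $\sigma$ off $E'$, which follows at once from the identification of the relevant henselian local rings. The containment $\mathrm{Exc}(\sigma) \subseteq E'$, guaranteed precisely by the hypothesis that the centre lies in $E$, is what makes the argument work, since it lets us ignore the exceptional behaviour of $\sigma$ when restricting to $X' \setminus E'$.
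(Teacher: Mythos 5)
Your proof is correct and takes essentially the same approach as the paper, which likewise argues in two steps: $(\Phi')^{-1}(F)=E'$ because the centre is contained in $E$, and the rank is unchanged off the centre since a blowing-up is an isomorphism there. Your write-up merely makes explicit the bookkeeping (the convention $E'=\sigma^{-1}(E)_{\mathrm{red}}$ and the local-ring identification giving $\mathrm{rk}_{\pa'}(\Phi')=\mathrm{rk}_{\pa}(\Phi)$ away from $E'$) that the paper's terse proof leaves implicit.
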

\begin{proof}
We have $\left(\Phi^{\prime}\right)^{-1}(F)=E^{\prime}$ because all the centres of blowing-up are contained in $E$. On the other hand, a blowing-up is an
isomorphism outside its centre, so the rank does not change outside of
it. Therefore, a quasi-prepared morphism of pairs is still quasi-prepared after blowings-up with centres contained in $E$.
\end{proof}

\section{\red{Strongly prepared morphisms}}
As a key preliminary step \red{towards monomialisation}, Cutkosky and Kashcheyeva consider the notion of strongly prepared:

\begin{definition}[Strongly prepared morphism onto a surface, see {\cite[Definition 3.2]{cutkosky2004monomialization}}]\label{spm}
Suppose that $\Phi:(X,E)\to(S,F)$ is a dominant quasi-prepared morphism from a smooth $n$-fold $X$ to a smooth surface $S$. We will say that $\Phi$ is strongly prepared at $\pa\in X$ if there exist $E$-adapted formal coordinates $(u, v)$ at $a$ and $F$-adapted coordinates
$(x_1,\ldots,x_n)$ in $\mathcal{O}_{Y,\Phi\left(\pa\right)}$ such that one of the following forms holds:
\begin{enumerate}
    \item $1\leq k\leq n-1$: $\pa$ is a $k$-point,
    \[
    x_1=\left(\vec{u}^{\vec{\alpha}}\right)^m,\quad z=P\left(\vec{u}^{\vec{\alpha}}\right)+\vec{u}^{\vec{\beta}}v_1,
    \]
    where $z$ is either divisorial $(z=x_2)$ or free $(z=y_1)$, $m>0$, $\alpha_i>0,\forall i\in\{1,\ldots,k\}$ with $\gcd\left(\alpha_1,\ldots,\alpha_k\right)=1$, $\beta_1,\ldots,\beta_k\geq 0$ and $P$ is a formal series;
    \item $2\leq k\leq n-1$: $a$ is a $k$-point,
    \[
    x_1=\left(\vec{u}^{\vec{\alpha}}\right)^m,\quad z=P\left(\vec{u}^{\vec{\alpha}}\right)+\vec{u}^{\vec{\beta}},
    \]
    where $z$ is either divisorial $(z=x_2)$ or free $(z=y_1)$, $m>0$, $\alpha_i>0,\forall i\in\{1,\ldots,k\}$ with $\gcd\left(\alpha_1,\ldots,\alpha_k\right)=1$, $\beta_1,\ldots,\beta_k\geq 0$, $\vec{\alpha}\wedge\vec{\beta}\neq 0$ and $P$ is a formal series;
    \item $2\leq k\leq n-1$: $a$ is a $k$-point,
    \[
    x_1=u_1^{\alpha_1}\cdots u_{k-1}^{\alpha_{k-1}},\quad x_2=u_2^{\beta_2}\cdots u_k^{\beta_k},
    \]
    where $\alpha_2,\ldots,\alpha_{k-1},\beta_2,\ldots,\beta_{k-1}\geq 0$, $\alpha_1,\beta_k>0$ and $\alpha_i+\beta_i>0,\forall i\in\{2,\ldots,k-1\}$.
\end{enumerate}
\end{definition}

\begin{example}
	For examples of strongly prepared morphism onto a surface, we have:
    \noindent
	Case (1),
	$$
	\left\{
	\begin{aligned}
		x_1&=\left(u_1u_2\right)^2\\
		y_1&=u_1u_2+u_1^3u_2^3v_1.
	\end{aligned}
	\right.
	$$
	    \noindent
	Case (2),
	$$
	\left\{
	\begin{aligned}
		x_1&=\left(u_1u_2\right)^2\\
		y_1&=u_1u_2+u_1^2u_2^3.
	\end{aligned}
	\right.
	$$
        Case (3),
        $$
	\left\{
	\begin{aligned}
		x_1&=u_1u_2^2\\
		x_1&=u_2^3u_3^4.
	\end{aligned}
	\right.
	$$
\end{example}

In fact, they have proved the following result.
\begin{theorem}[{\cite[Theorem 18.19]{cutkosky2002monomialization}}]\label{blackbox}
Suppose that $\Phi:(X,E)\rightarrow (S,F)$ is a strongly prepared morphism from a smooth $3$-fold $X$ to a smooth surface $S$. Then $\Phi$ admits a monomialisation.
\end{theorem}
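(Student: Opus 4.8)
The plan is to reduce each of the three local normal forms of a strongly prepared morphism to a monomial morphism (Definition \ref{AMM}) by finitely many blowings-up with centres contained in the divisors, controlling a numerical invariant built from the exponent data $\vec{\alpha},\vec{\beta}$, the multiplicity $m$, and the order of the formal series $P$. The starting observation is that Case (3) is already monomial: in the variables $u_1,\ldots,u_k$ the exponent matrix attached to $x_1=u_1^{\alpha_1}\cdots u_{k-1}^{\alpha_{k-1}}$ and $x_2=u_2^{\beta_2}\cdots u_k^{\beta_k}$ has rank $2$, since $\alpha_1>0$ occupies the first column and $\beta_k>0$ the last, so this form already fits Definition \ref{AMM}. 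Hence the whole difficulty is concentrated in Cases (1) and (2), which differ from a monomial morphism by the series $P$ and, in Case (1), by the free factor $v_1$. I would treat these two obstructions in turn, running a descending induction on the chosen invariant.

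First I would remove the series $P$. Since $x_1=\left(\vec{u}^{\vec{\alpha}}\right)^m$, formally $\vec{u}^{\vec{\alpha}}=x_1^{1/m}$, so $P\left(\vec{u}^{\vec{\alpha}}\right)=P\left(x_1^{1/m}\right)$ becomes the pullback of a function on the target once an $m$-th root of $x_1$ is available. I would therefore perform monoidal transforms of $(S,F)$ along $F$ so as to arrange an $F$-adapted coordinate $\bar{x}_1$ with $\bar{x}_1^{\,m}=x_1$ near the relevant image point, and then apply the coordinate change $z\mapsto z-P(\bar{x}_1)$ in the target: this is an admissible translation when $z=y_1$ is free, and a more careful modification when $z=x_2$ is divisorial, where one must keep the component $\{x_2=0\}$ of $F$ intact. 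This reduces both cases to $P=0$, namely $z=\vec{u}^{\vec{\beta}}v_1$ in Case (1) and $z=\vec{u}^{\vec{\beta}}$ in Case (2). The delicate points here are checking that the modification preserves $F$ and keeps the morphism strongly prepared, and that the root can be realised after finitely many target blowings-up; one uses crucially that $P$ is a series in the single monomial $\vec{u}^{\vec{\alpha}}$.

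With $P=0$, Case (2) is finished immediately: the hypothesis $\vec{\alpha}\wedge\vec{\beta}\neq 0$ says that $\vec{\alpha}$ and $\vec{\beta}$ are linearly independent, so the exponent matrix of $x_1=\left(\vec{u}^{\vec{\alpha}}\right)^m$, $z=\vec{u}^{\vec{\beta}}$ has rank $2$ and the morphism is monomial. The remaining, and genuinely hard, situation is Case (1) with $P=0$, where $x_1=\left(\vec{u}^{\vec{\alpha}}\right)^m$ and $z=\vec{u}^{\vec{\beta}}v_1$: here the free variable $v_1$ obstructs monomiality, and there is \emph{no} a priori independence of $\vec{\alpha}$ and $\vec{\beta}$, so one cannot simply discard $v_1$. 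I would run a sequence of blowings-up of the strata of $E$ (intersections of its components, corresponding to Perron/Euclidean transformations of the exponent vectors), chosen so that a lexicographic invariant combining $|\vec{\beta}|$ with the $\gcd$-data of $\vec{\alpha}$ strictly decreases, while the stability statement of the type of Lemma \ref{lem:PreserveQuasi-prepared} guarantees that the strongly prepared form is preserved under centres contained in $E$.

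The main obstacle is precisely this last termination argument: one must show that such centres can always be chosen so that the factor $v_1$ is absorbed into the monomial part and the exponent matrix attains rank $2$ in finitely many steps, without ever leaving the strongly prepared class. This combinatorial descent, together with the bookkeeping needed to match the source and target transforms into a commutative square, is exactly the technical core carried out by Cutkosky and Kashcheyeva; it is this step that we invoke as a black box rather than reprove here, the reductions above being comparatively formal once it is granted.
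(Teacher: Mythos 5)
First, a framing point: the paper does not prove Theorem \ref{blackbox} at all. It is quoted from Cutkosky (\cite[Theorem 18.19]{cutkosky2002monomialization}, together with Cutkosky--Kashcheyeva \cite{cutkosky2004monomialization}) and used strictly as a black box, as the introduction states explicitly. Since your proposal ultimately also defers the ``combinatorial descent'' to that work, you are in that sense consistent with the paper's treatment; and two of your observations are correct and easy to verify: Case (3) of Definition \ref{spm} is already monomial (the columns $1$ and $k$ of the exponent matrix give a $2\times 2$ minor $\alpha_1\beta_k>0$), and Case (2) with $P=0$ is monomial because $\vec{\alpha}\wedge\vec{\beta}\neq 0$ makes the exponent matrix of $x_1=\left(\vec{u}^{\vec{\alpha}}\right)^m$, $z=\vec{u}^{\vec{\beta}}$ have rank $2$. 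The problem is that the reductions you present as ``comparatively formal'' are not, and one of them fails outright.

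The genuine gap is your first step: you cannot ``perform monoidal transforms of $(S,F)$ along $F$ so as to arrange an $F$-adapted coordinate $\bar{x}_1$ with $\bar{x}_1^{\,m}=x_1$''. Blowings-up, which are all that Definition \ref{MOM} permits on the target, are birational morphisms, whereas adjoining an $m$-th root of a coordinate is the degree-$m$ ramified cover $\bar{x}_1\mapsto\bar{x}_1^{\,m}$; no finite sequence of blowings-up can realise it. Consequently $P\left(\vec{u}^{\vec{\alpha}}\right)=P\left(x_1^{1/m}\right)$ is in general \emph{not} a pullback of a regular function on any blown-up target, the translation $z\mapsto z-P(\bar{x}_1)$ is unavailable, and only the terms of $P$ whose order is divisible by $m$ can be absorbed by a legitimate target coordinate change. (Moreover, when $z=x_2$ is divisorial, even a legitimate translation would move the component $\{x_2=0\}$ of $F$ and break the morphism-of-pairs structure, as you half-acknowledge without resolving.) This is not a repairable bookkeeping issue: in Cutkosky--Kashcheyeva's actual argument the series $P$ cannot be eliminated in a preliminary step at all; it is consumed term by term through the Euclidean/Perron combinatorics of point blowings-up on $S$ interleaved with the source blowings-up, and proving that this interleaved process terminates is precisely the hard content of the theorem. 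So your decomposition ``first kill $P$, then absorb $v_1$'' does not describe a viable proof architecture, and the honest summary of your proposal is that, after two sound but elementary observations about Cases (2) and (3), it reduces to citing the same black box the paper cites --- which is fine, but then the intermediate reduction should not be asserted.
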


\red{In Proposition \ref{light} below, we will provide an \emph{intrinsic} characterization of strongly prepared morphisms.}

\section{Log-Fitting ideals and log-rank}\label{ssec:LogFittingCharacterizationStronglyPrepared}
\subsection{\red{Definition of log-Fitting ideals}}

Let $\left(X,E\right)$ be a smooth $\mathbb{K}$-variety (of dimension $n$) along with a reduced SNC divisor. We denote by $\Omega_X^1$ the $\mathcal{O}_X$-module of 1-forms on $X$ and by $\Omega_X^1\left(\text{log}E\right)$ the $\mathcal{O}_X$-module of logarithmic 1-forms on $X$. Let $\pa\in X$ and let $\left(\vec{u},\vec{v}\right)$ be $E$-adapted \'etale or analytic coordinate system at $\pa$. The sheaf $\Omega_X^1\left(\text{log}E\right)$ is locally free at $\pa$ with basis given by
$$
\frac{du_i}{u_i},i=1,\ldots,s,\text{ and }dv_j,j=1,\ldots,n-s.
$$
The logarithmic Jacobian matrix is defined analogously to Jacobian matrix using these bases. The sheaf $\Omega^k_X\left(\text{log}E\right)$ of logarithmic $k$-forms over $X$ is defined in terms of $\Omega^1_X\left(\text{log}E\right)$ in the standard way.

Let $\Phi:(X,E)\rightarrow (Y,F)$ be a morphism of pairs. For all $k=1, \ldots, N$, $\Phi^*\left(\Omega_Y^k(\log F)\right)$ is a subsheaf of $\Omega_X^k(\log E)$. For the proof, see {\cite[Lemma 4.1]{bbmono}}. One has a slogan: \textbf{Pullbacks of $\log$ differentials are well-defined}. For the definition of logarithmic Fitting ideal sheaf $\mathcal{F}_{n-k}(\Phi)$ associated to $\Phi$, see {\cite[Definition 4.2]{bbmono}}. We restate it here using our notation.
\begin{definition}\label{LFD}
For any $k\in\{1,\ldots,\mbox{min}\{n,N\}\}$, the logarithmic Fitting ideal sheaf $\mathcal{F}_{n-k}(\Phi)$ associated to $\Phi$ is the ideal subsheaf of $\mathcal{O}_X$ whose stalk $\mathcal{F}_{n-k}(\Phi)_{\pa}$ at $\pa \in X$ can be described (in $E$-adapted coordinates at $\pa$) in the following way: if $\omega\in\left(\Omega_Y^k\left(\text{log}F\right)\right)_{\Phi\left(\pa\right)}$, then
$$
\Phi^* \omega=\sum_{I, J} B_{I, J}^\omega(\boldsymbol{u}, \boldsymbol{v}) \frac{d u_{i_1}}{u_{i_1}} \wedge \cdots \wedge \frac{d u_{i_l}}{u_{i_l}} \wedge d v_{j_1} \wedge \cdots \wedge d v_{j_{k-l}}
$$
where the sum is over all pairs $(I, J)$ with $I=\left(i_1, \ldots i_l\right), 1 \leq i_1<\cdots<i_l \leq s$, $J=\left(j_1, \ldots<j_{k-l}\right), 1 \leq j_1<\cdots j_{k-l} \leq n-s$ and $B_{I, J}^\omega\in\mathcal{O}_{X,\pa}$, and $\mathcal{F}_{n-k}(\Phi)_{\pa}$ is generated by the set $\{B_{I, J}^\omega(\boldsymbol{u}, \boldsymbol{v})\}_{I,J,\omega\in\Omega_Y^k\left(\text{log}F\right)}$.
\end{definition}

\begin{remark}
    Note that, for a morphism $\Phi$, being quasi-prepared implies immediately $V\left(\mathcal{F}_{0}\left(\Phi\right)\right)\subseteq E$. Consequently, $V\left(\mathcal{F}_{k}\left(\Phi\right)\right)\subseteq E$ since $V\left(\mathcal{F}_{k}\left(\Phi\right)\right)\subseteq V\left(\mathcal{F}_{0}\left(\Phi\right)\right)$, $\forall 0<k<N$.
\end{remark}

\subsection{Definition of log-rank}
\begin{definition}
    Let $\Phi:(X,E)\rightarrow (Y,F)$ be a morphism of pairs. $\forall\pa\in X$, the logarithmic rank of $\Phi$ at $\pa$ is defined as the rank of the logarithmic Jacobian matrix logrk$_{\pa}\left(\Phi\right)$.
\end{definition}

\begin{lemma}\label{Coincident}
    Let $\Phi:\left(X,E\right)\rightarrow\left(Y,\emptyset\right)$ be a morphism between smooth $\mathbb{K}$-varieties and $\pa$ a closed $s$-point with $\pa\in D:=E_1\cap\ldots\cap E_s$. Then logrk$_{\pa}\left(\Phi\right)=\mbox{grk}_{\pa}\Phi|_D$.
\end{lemma}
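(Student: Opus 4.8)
The plan is to reduce the statement to an explicit description of the logarithmic Jacobian matrix in $E$-adapted coordinates. First I would fix $E$-adapted (étale or formal) coordinates $(\vec u,\vec v)=(u_1,\dots,u_s,v_1,\dots,v_{n-s})$ centred at $\pa$, so that $E=\{u_1\cdots u_s=0\}$ and the deepest stratum $D=E_1\cap\cdots\cap E_s=\{u_1=\cdots=u_s=0\}$ is cut out by the $u_i$ and carries $(v_1,\dots,v_{n-s})$ as a coordinate system. Since the target divisor is empty, $\Phi^*\big(\Omega^1_Y(\log\emptyset)\big)=\Phi^*\Omega^1_Y$; choosing regular coordinates $z_1,\dots,z_N$ on $Y$ at $\Phi(\pa)$ and writing $f_k:=\Phi^*z_k$, the entries of the logarithmic Jacobian are read off from
\[
\Phi^*dz_k=df_k=\sum_{i=1}^{s}\Big(u_i\frac{\partial f_k}{\partial u_i}\Big)\frac{du_i}{u_i}+\sum_{j=1}^{n-s}\frac{\partial f_k}{\partial v_j}\,dv_j .
\]

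Next I would exploit the resulting block structure. The columns attached to the logarithmic generators $\frac{du_i}{u_i}$ have entries $u_i\,\partial_{u_i}f_k$, each divisible by $u_i$, whereas the columns attached to the $dv_j$ have entries $\partial_{v_j}f_k$. The decisive observation is that the first block vanishes identically along $D$, since every entry carries a factor $u_i$ and $u_i|_D=0$; moreover, restricting the second block to $D$ yields $(\partial_{v_j}f_k)|_D=\partial_{v_j}(f_k|_D)$, which is exactly the ordinary Jacobian matrix of the restricted morphism $\Phi|_D\colon D\to Y$, expressed in the coordinates $(v_1,\dots,v_{n-s})$ on $D$ and $(z_1,\dots,z_N)$ on $Y$. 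Thus along $D$ the logarithmic Jacobian of $\Phi$ and the Jacobian of $\Phi|_D$ agree as matrices of regular functions on $D$.

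It then remains to pass to ranks. As $D$ and $Y$ are smooth, the remark following Definition \ref{RK} identifies $\mbox{grk}_{\pa}(\Phi|_D)$ with the rank of the Jacobian matrix of $\Phi|_D$; by the previous paragraph this is the very matrix that, restricted to $D$, computes $\mbox{logrk}_{\pa}(\Phi)$. Hence the two quantities coincide.

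The hard part will be to match the two notions of rank consistently, rather than the differential computation itself, which is routine. The geometric rank $\mbox{grk}_{\pa}(\Phi|_D)$ is the generic rank of the $\mathcal{O}_D$-module generated by $\Phi|_D^*\Omega^1_Y$, and by Lemma \ref{ins} it is constant along $D$, whereas the logarithmic rank is tied to $\pa$. What reconciles them is precisely that the divisorial columns vanish along the \emph{whole} stratum $D$, and not merely at the point $\pa$: this is what guarantees that the logarithmic Jacobian genuinely computes the restricted morphism $\Phi|_D$ on $D$, so that measuring its rank returns $\mbox{grk}_{\pa}(\Phi|_D)$ rather than an a priori larger number coming from the divisorial directions. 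I would therefore take care to carry out the rank comparison on $D$ throughout, using the identity of the two matrices established above.
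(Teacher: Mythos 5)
Your proof is correct and is essentially the paper's argument: the paper's entire proof is the one-line remark that one should compare the logarithmic Jacobian of $\Phi$ with the Jacobian of $\Phi|_D$, which you simply carry out explicitly via the block structure $\bigl(u_i\partial_{u_i}f_k \mid \partial_{v_j}f_k\bigr)$ and the vanishing of the divisorial block along $D$. Your closing paragraph, insisting that the rank comparison be performed along all of $D$ (so that $\mathrm{logrk}_{\pa}(\Phi)$ is matched with the generic rank $\mathrm{grk}_{\pa}(\Phi|_D)$ of Definition \ref{GRK}, not merely a pointwise evaluation at $\pa$), makes explicit a subtlety the paper's one-line proof leaves implicit.
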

\begin{proof}
It suffices to check log-Jacobian matrix and Jacobian matrix corresponding to logrk$_{\pa}\left(\Phi\right)$ and $\mbox{grk}_{\pa}\Phi|_D$ respectively.
\end{proof}

\subsection{\red{Intrinsic characterisation of strongly prepared morphisms}}
\red{We are now ready to provide an intrinsic characterisation of strongly prepared morphisms. We start with the following well-known result:}

\begin{lemma}\label{formalpmtopm}
\red{Let $X$ be a scheme, $\pa \in X$ and $\mathcal{I}\subset\mathcal{O}_{X,\pa}$ be an ideal sheaf. Then $\mathcal{I}$ is principal monomial if and only if $\mathcal{I}\cdot\widehat{\mathcal{O}}_{X,\pa}$ is principal monomial.}
\end{lemma}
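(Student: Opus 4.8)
The plan is to deduce the statement from the faithful flatness of the completion homomorphism $R := \mathcal{O}_{X,\pa} \to \widehat R := \widehat{\mathcal{O}}_{X,\pa}$, after reconciling the formal monomial structure on $\widehat R$ with an algebraic one on $R$. Throughout I work in the smooth setting with SNC divisor $E$ that gives meaning to the notion of \emph{principal monomial}, so that $R$ is a regular local ring with $E$-adapted regular parameters $(\vec u,\vec v)$ and $E=\{u_1\cdots u_s=0\}$ near $\pa$. With this in hand, the forward implication is immediate: if $\mathcal{I}=(u^{\vec a})$ with $u^{\vec a}=u_1^{a_1}\cdots u_s^{a_s}$, then $\mathcal{I}\cdot\widehat R=(u^{\vec a})\widehat R$, and since the images of $(\vec u,\vec v)$ form an $E$-adapted formal coordinate system, $\mathcal{I}\cdot\widehat R$ is again principal monomial.

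The converse is the substance of the lemma, and its first — and main — step is to replace the formal monomial generator of $\mathcal{I}\widehat R$ by an \emph{algebraic} one up to a unit. Suppose $\mathcal{I}\widehat R=(\hat u^{\vec a})$ in some $E$-adapted formal coordinate system $(\hat u,\hat v)$. Since each component $E_i$ is smooth, the quotient $\widehat R/(u_i)\widehat R\cong\widehat{\mathcal{O}}_{E_i,\pa}$ is a domain, so $(u_i)\widehat R$ is a height-one prime; the same holds for each $(\hat u_j)\widehat R$. Both families are therefore precisely the set of minimal primes over the radical ideal $(u_1\cdots u_s)\widehat R=(\hat u_1\cdots\hat u_s)\widehat R$ defining $E$ in $\widehat R$, so they coincide after a permutation $\pi$; since each such prime is principal, we obtain $\hat u_j=w_j\,u_{\pi(j)}$ for units $w_j$. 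Consequently $\hat u^{\vec a}$ equals $u^{\vec b}$ up to a unit, where $\vec b$ is the permuted exponent vector, whence $\mathcal{I}\widehat R=(u^{\vec b})\widehat R$.

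The second step is then pure descent. For the faithfully flat extension $R\to\widehat R$ and any ideal $J\subseteq R$ one has $J\widehat R\cap R=J$, because $R/J\to\widehat R/J\widehat R$ is again faithfully flat, hence injective. Taking $J=(u^{\vec b})$ and combining with the previous step gives $\mathcal{I}=\mathcal{I}\widehat R\cap R=(u^{\vec b})\widehat R\cap R=(u^{\vec b})$, so that $\mathcal{I}$ is principal monomial in $R$. I expect the only genuinely delicate point to be the coordinate-matching of the first step, namely identifying the formal divisorial generators $\hat u_j$ with the algebraic ones $u_i$ up to units via the irreducibility of the components $E_i$ in the completion; once the formal generator has been rewritten algebraically, the faithfully flat descent is automatic.
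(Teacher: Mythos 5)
Your proof is correct, but it takes a genuinely different route from the paper's, and in one respect it is more complete. The paper descends principality first: using flatness of $R=\mathcal{O}_{X,\pa}\to\widehat{R}$ it identifies $\mathcal{I}\widehat{R}\otimes_{\widehat{R}}\widehat{R}/\mathfrak{m}_{\pa}\widehat{R}$ with $\mathcal{I}/\mathfrak{m}_{\pa}\mathcal{I}$, deduces that the latter is generated by a single element, concludes $\mathcal{I}=(f)$ by Nakayama, and then disposes of monomiality in one unproved sentence (``if $\mathcal{I}\cdot\widehat{\mathcal{O}}_{X,\pa}$ is monomial, $f$ is also monomial''). You instead first make the formal generator algebraic --- matching the formal divisorial coordinates $\hat{u}_j$ with the algebraic $u_i$ up to units via the minimal primes of the ideal of $E$ in $\widehat{R}$, which is legitimate because $\widehat{R}$ is a regular local domain and each $(u_i)\widehat{R}$ and $(\hat{u}_j)\widehat{R}$ is a principal height-one prime --- and then contract by faithfully flat descent, $J\widehat{R}\cap R=J$, applied to both $\mathcal{I}$ and $(u^{\vec{b}})$, avoiding Nakayama and any choice of generator. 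The trade-off: the paper's Nakayama step descends principality over an arbitrary Noetherian local ring with no regularity hypotheses, whereas your first step uses the smooth/SNC structure; but that structure is needed anyway for ``monomial'' to have meaning (the lemma as stated for a bare scheme is loose, and your reading matches the paper's usage), and your coordinate-matching argument is precisely the content missing from the paper's terse last sentence --- indeed, completing the paper's route would require the same contraction $(u^{\vec{b}})\widehat{R}\cap R=(u^{\vec{b}})$ to see that the unit relating $f$ to the monomial actually lies in $R$. So your proof can be read as supplying the justification the paper elides.
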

\begin{proof}	
\red{Note that the forward implication is direct. So, let us assume that $\mathcal{I}\cdot\widehat{\mathcal{O}}_{X,\pa} $ is principal monomial and prove the other implication.}

Notice that $\mathcal{O}_{X,\pa}\rightarrow \widehat{\mathcal{O}_{X,\pa}}$ is flat because the completion of a Noetherian local ring with respect to its maximal ideal is faithfully flat; see \cite[\href{https://stacks.math.columbia.edu/tag/00MC}{Lemma 00MC}]{stacks-project}. Therefore $\mathcal{I}\cdot\widehat{\mathcal{O}_{X,\pa}}=\mathcal{I}\otimes_{\mathcal{O}_{X,\pa}}\widehat{\mathcal{O}_{X,\pa}}$. Furthermore, $\widehat{\mathcal{O}_{X,\pa}}/\mathfrak{m}_{\pa}\widehat{\mathcal{O}_{X,\pa}}=\mathcal{O}_{X,\pa}/\mathfrak{m}_{\pa}$ by construction. Thus, we have
	$$
	\begin{aligned}
		\mathcal{I}\cdot\widehat{\mathcal{O}_{X,\pa}}\otimes_{\widehat{\mathcal{O}_{X,\pa}}}\widehat{\mathcal{O}_{X,\pa}}/\mathfrak{m}_{\pa}\widehat{\mathcal{O}_{X,\pa}}&=\left(\mathcal{I}\otimes_{\mathcal{O}_{X,\pa}}\widehat{\mathcal{O}_{X,\pa}}\right)\otimes_{\widehat{\mathcal{O}_{X,\pa}}}\mathcal{O}_{X,\pa}/\mathfrak{m}_{\pa}\\&=\mathcal{I}\otimes_{\mathcal{O}_{X,\pa}}\mathcal{O}_{X,\pa}/\mathfrak{m}_{\pa}.
	\end{aligned}
	$$
	Note that $\mathcal{I}\cdot\widehat{\mathcal{O}_{X,\pa}}\otimes_{\widehat{\mathcal{O}_{X,\pa}}}\widehat{\mathcal{O}_{X,\pa}}/\mathfrak{m}_{\pa}\widehat{\mathcal{O}_{X,\pa}}$ is principal by hypothesis. Consequently, $\mathcal{I}\otimes_{\mathcal{O}_{X,\pa}}\mathcal{O}_{X,\pa}/\mathfrak{m}_{\pa}=\mathcal{I}/\mathcal{I}\mathfrak{m}_{\pa}$ is principal. This means that $\exists f\in\mathcal{I}\setminus \mathcal{I}\mathfrak{m}_{\pa}$ such that $\mathcal{I}/\mathcal{I}\mathfrak{m}_{\pa}=\bar{f}\mathcal{O}_{X,\pa}$, where $\bar{f}$ is $f$ modulo $\mathcal{I}\mathfrak{m}_{\pa}$. Then $\mathcal{I}=f\mathcal{O}_{X,\pa}+\mathcal{I}\mathfrak{m}_{\pa}$ and $\mathcal{I}/f\mathcal{O}_{X,\pa}=\mathfrak{m}_{\pa}\mathcal{I}/f\mathcal{O}_{X,\pa}$. By Nakayama's Lemma, $\mathcal{I}/f\mathcal{O}_{X,\pa}=0$, and thus $\mathcal{I}=f\mathcal{O}_{X,\pa}$. If $\mathcal{I}\cdot\widehat{\mathcal{O}}_{X,\pa}$ is monomial, $f$ is also monomial.
\end{proof}

\red{We may now prove the desired characterisation, following the methods from Theorem 4.4 of \cite{bbmono} and Lemma 3.1 from \cite{da2017resolution}.}

\begin{proposition}\label{light}
Let $\Phi: (X,E) \to (S,F)$ be a quasi-prepared morphism onto a surface. Then, $\Phi$ is strongly prepared if and only if $\mathcal{F}_{0}(\Phi)$ is locally principal monomial.
\end{proposition}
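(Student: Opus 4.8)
The plan is to prove both implications by unwinding the definition of $\mathcal{F}_0(\Phi)$ in $E$-adapted coordinates and matching it against the three normal forms of Definition~\ref{spm}. By Lemma~\ref{formalpmtopm}, the property ``$\mathcal{F}_0(\Phi)$ is locally principal monomial'' may be checked after completion, so throughout I would work in $E$-adapted \emph{formal} coordinates $(\vec u, \vec v)$ at a closed $k$-point $\pa$ and $F$-adapted coordinates $(x_1,\ldots,x_N)=(x_1,x_2)$ (here $N=2$) or $(x_1,y_1)$ at $\Phi(\pa)$. Since we are mapping onto a surface, $\mathcal{F}_0(\Phi)$ is the top log-Fitting ideal, generated by the coefficients $B^{\omega}_{I,J}$ of $\Phi^*\omega$ for $\omega$ ranging over the generators of $\Omega^2_S(\log F)$ — concretely, $\mathcal{F}_0(\Phi)_{\pa}$ is generated by the $2\times 2$ minors of the logarithmic Jacobian matrix of $\Phi$ (pulling back $\tfrac{dx_1}{x_1}\wedge\tfrac{dx_2}{x_2}$, $\tfrac{dx_1}{x_1}\wedge dy_1$, etc., according to which target coordinates are divisorial). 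This reduces the statement to a purely computational comparison.

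\textbf{Forward implication.} Assuming $\Phi$ is strongly prepared at $\pa$, I would compute $\Phi^*\left(\tfrac{dx_1}{x_1}\wedge d\zeta\right)$ (with $\zeta$ the second target coordinate, divisorial or free) directly in each of the three cases. For instance, in Case~(1) with $z=y_1$ free one has $x_1=(\vec u^{\vec\alpha})^m$ and $y_1 = P(\vec u^{\vec\alpha}) + \vec u^{\vec\beta} v_1$, so $\tfrac{dx_1}{x_1} = m\sum_i \alpha_i \tfrac{du_i}{u_i}$ is already logarithmic and $dy_1$ contributes $\vec u^{\vec\beta}(\cdots)$ terms; the resulting $2\times 2$ minors all carry a common monomial factor, exhibiting $\mathcal{F}_0(\Phi)_{\pa}$ as generated by a single monomial times a unit. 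The same bookkeeping in Cases~(2) and~(3) — using the coprimality $\gcd(\alpha_i)=1$, the conditions $\alpha_i+\beta_i>0$, and $\vec\alpha\wedge\vec\beta\neq 0$ to guarantee a nonzero minor — shows $\mathcal{F}_0(\Phi)_{\pa}$ is principal monomial. This part is essentially the computation already present in \cite[Theorem~4.4]{bbmono} specialised to the surface target.

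\textbf{Converse.} This is where I expect the main obstacle. Assuming $\mathcal{F}_0(\Phi)$ is locally principal monomial, I must \emph{recover} one of the three normal forms. The strategy, following Lemma~3.1 of \cite{da2017resolution}, is to first use quasi-preparedness (so $\mathrm{Sing}(\Phi)\subseteq E$ and $\Phi^{-1}(F)_{\mathrm{red}}=E$) to write $x_1=\vec u^{\vec\gamma}\cdot(\text{unit})$ after reordering, and then to perform a formal change of the $v$-coordinates to straighten the second component. The principal-monomiality of $\mathcal{F}_0(\Phi)$ controls the vanishing of the log-Jacobian minors and forces the exponent vectors of the two pulled-back target coordinates to be, up to the allowed monomial and unit factors, exactly of the shape prescribed; the trichotomy in Definition~\ref{spm} then arises according to whether the second coordinate is divisorial or free and whether the ``$\vec u^{\vec\beta}v_1$'' versus ``$\vec u^{\vec\beta}$'' term survives (governed by $\vec\alpha\wedge\vec\beta$).

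\textbf{The hard part} will be organising this converse so that the formal coordinate change producing $v_1$ is legitimate and so that the case distinction in Definition~\ref{spm} is genuinely \emph{exhaustive and mutually exclusive} — i.e. verifying that local principality of $\mathcal{F}_0(\Phi)$ leaves no configuration of exponents outside the three listed forms. I would handle this by a normalisation argument: after factoring out the common monomial from the log-Jacobian, the residual unit part must have rank matching $\mathrm{grk}$ (via Lemma~\ref{Coincident} relating the log-rank to the geometric rank of $\Phi|_D$ along the stratum $D=E_1\cap\cdots\cap E_k$), and one reads off which form applies. I would lean on \cite[Theorem~4.4]{bbmono} and \cite[Lemma~3.1]{da2017resolution} to supply the detailed coordinate manipulations rather than reproducing them.
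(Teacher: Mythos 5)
Your proposal is correct and takes essentially the same route as the paper, which itself proves the forward implication by directly computing the pullback of the generating log $2$-form in each normal form and invoking Lemma~\ref{formalpmtopm}, and proves the converse following \cite[Lemma 3.1]{da2017resolution} and \cite[Theorem 4.4]{bbmono}: quasi-preparedness yields $x_1$ as a power of a monomial, a formal expansion of the second component combined with principal monomiality of $\mathcal{F}_{0}(\Phi)$ (the $2\times 2$ log-Jacobian minors) forces one of the three forms, with the implicit function theorem producing $v_1$ and the $\vec{\alpha}\wedge\vec{\beta}$ condition separating the cases. The paper organises the converse by whether $\Phi(\pa)$ is a $1$-point or a $2$-point of $F$, which is the same dichotomy as your divisorial/free split, so there is no substantive difference in approach.
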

\begin{proof}
We use $E,F$-adapted coordinates.

If $\Phi$ is strongly prepared, one easily check that $\frac{dx_1}{x_1}\wedge \frac{dx_2}{x_2}$ or $\frac{dx_1}{x_1}\wedge dy_2$ is formally principal monomial by definition of strongly prepared morphism onto a surface. Then by Lemma \ref{formalpmtopm}, they are principal monomial. Besides, $V\left(\mathcal{F}_{0}\left(\sigma\right)\right)\subseteq E$ since the generator of $\mathcal{F}_{0}$ is monomial on $\{u_1,\ldots,u_s\}$.

We shall prove the other implication in what follows.

Note $\pb=\phi(\pa)$. There are two cases: either $\pb\in S$ is a 1-point, or $\pb\in S$ is a 2-point.

\begin{enumerate}
\item\label{case12} If $\pb$ is a 1-point, let $U$ an open neighbourhood of $\pb$ such that $\phi\left(E\cap U\right)\subseteq F_1$ with $F_1$ an irreducible component of $F$. Since $\phi^{-1}(F)_{red}=E$ by the definition of quasi-prepared morphism, $\Phi^{-1}\left(F_1\right)\cap U=E\cap U$. Suppose that $x_1=0$ is the local equation of $F$ containing $\pb$; then $x_1$ can be factored out of all the exponents of local coordinates, and the rest must be a unit because the base field is algebraically closed and $\phi^{-1}\left(F_1\right)\cap U=E\cap U$. By making an étale change of coordinates, one can write $x_1=\left(u_1^{\alpha_1}\cdots u_k^{\alpha_k}\right)^m$ with $\left(\alpha_1,\ldots,\alpha_k\right)=1$ and $\alpha_1,\ldots,\alpha_k,m\in \mathbb{N}^*$.

Then we can write
$$
\left\{
\begin{aligned}
    x_1&=\left(u_1^{\alpha_1}\ldots u_k^{\alpha_k}\right)^m,m\in\mathbb{N}^*\\
    y_1&=P\left(u_1^{\alpha_1}\ldots u_k^{\alpha_k}\right)+R\left(u_1,\ldots,u_k,v_1,\ldots,v_{n-k}\right).
\end{aligned}
\right.
$$
Now we make a formal expansion and write
$$
R\left(u_1,\ldots,u_k,v_1,\ldots,v_{n-k}\right)=\sum_{\vec{\gamma}\in\mathbb{N}^n}R_{\vec{\gamma}}u_1^{\gamma_1}\ldots u_k^{\gamma_k}v_1^{\gamma_k+1}\ldots v_{n-k}^{\gamma_n},
$$
where $\vec{\gamma}\wedge \vec{\alpha}=0$ (note $\vec{\gamma}=\left(\gamma_1,\ldots,\gamma_n\right)$) and $R_{\vec{\gamma}}\in\mathbb{K}^*$. Now
$$
\begin{aligned}
    \frac{dx_1}{x_1}\wedge dy_1&=\frac{d\left(u_1^{\alpha_1}\ldots u_k^{\alpha_k}\right)^m}{\left(u_1^{\alpha_1}\ldots u_k^{\alpha_k}\right)^m}\wedge d\sum_{\vec{\gamma}\in\mathbb{N}^n}R_{\vec{\gamma}}u_1^{\gamma_1}\ldots u_k^{\gamma_k}v_1^{\gamma_k+1}\ldots v_{n-k}^{\gamma_n}\\
    &=\sum_{\vec{\gamma}\in\mathbb{N}^n}\sum_{i,j\in\{1,\ldots,k\}}mR_{\vec{\gamma}}u_1^{\gamma_1}\ldots u_k^{\gamma_k}v_1^{\gamma_k+1}\ldots v_{n-k}^{\gamma_n}\left(\alpha_i\gamma_j-\alpha_j\gamma_i\right)\frac{du_i}{u_i}\wedge\frac{du_j}{u_j}\\
    &+\sum_{\vec{\gamma}\in\mathbb{N}^n}\sum_{\substack{i\in\{1,\ldots,k\}\\l\in\{1,\ldots,n-k\}}}mR_{\vec{\gamma}}\frac{u_1^{\gamma_1}\ldots u_k^{\gamma_k}v_1^{\gamma_k+1}\ldots v_{n-k}^{\gamma_n}}{v_l}\alpha_i\gamma_l\frac{du_i}{u_i}\wedge dv_l
\end{aligned}
$$
If $\mathcal{F}_0\left(\phi\right)$ is principal monomial, $\mathcal{F}_0\left(\phi\right)=\vec{u}^{\vec{\gamma_0}}$. Then $u_1^{\gamma^0_1}\ldots u_k^{\gamma^0_k}v_1^{\gamma^0_{k+1}}\ldots v_{n-k}^{\gamma^0_n}$ divides $\frac{dx_1}{x_1}\wedge dy_1$. Hence $\vec{\gamma^0}|\vec{\gamma},\forall\vec{\gamma}$.

If $\vec{\gamma_0}$ comes from the first double summation of $\frac{dx_1}{x_1}\wedge dy_1$, then $\gamma^0_l=0,l\in\{k+1,\ldots,n\}$, since $V\left(\mathcal{F}_{0}\left(\sigma\right)\right)\subseteq E$. We have $R=\sum_{\vec{\gamma}\in\mathbb{N}^n}R_{\vec{\gamma}}u_1^{\gamma^0_1}\ldots u_k^{\gamma^0_k}v_1^{\gamma^0_{k+1}}\ldots v_{n-k}^{\gamma^0_n}\theta$ with $\theta$ a unit. We are in the case (1) of Definition \ref{spm} by making an étale change of coordinates.

When $\vec{\gamma_0}$ can not come from the first summation, we have that $\forall \vec{\gamma}$, $\exists l\in\{1,\ldots,n-k\}$ such that $\vec{\gamma^0}|\frac{\vec{\gamma}}{v_l}$. Furthermore, $\exists \vec{\gamma^1},l_1$ such that $\vec{\gamma^0}=\frac{\vec{\gamma_1}}{x_{l_1}}$ with $l_1\in\{1,\ldots,n-k\}$ and $\gamma^1_l=0$, $\forall l\neq l_1\in\{1,\ldots,n-k\}$ (several such $\vec{\gamma^1},l_1$ might exist). Write $R=\vec{x}^{\vec{\gamma^0}}f\left(u_1,\ldots,u_k,v_1,\ldots,v_{n-k}\right)$. Therefore, $\exists l\in\{1,\ldots,n-k\}$ such that $\partial_{x_l}f(0)\neq 0$ (one could choose $l_1$ for example). Then we can use the implicit function theorem and it turns out that we are in the case (2) of Definition \ref{spm}.

\item If $\pb$ is a 2-point, then $x_1x_2=0$ is the local equation of $F$ at $\pb$, and the pullback of $x_1x_2$ is monomial on $\left(u_1,\ldots,u_k,v_1,\ldots,v_{n-k}\right)$. We have
$$
\left\{
\begin{aligned}
    x_1&=u_1^{\alpha_1}\ldots u_k^{\alpha_k}\zeta\\
    x_2&=u_1^{\beta_1}\ldots u_k^{\beta_k}\eta,
\end{aligned}
\right.
$$
where $\zeta$ and $\eta$ are units, $\alpha_2,\ldots,\alpha_{k-1},\beta_2,\ldots,\beta_{k-1}\geq 0$, $\alpha_1,\beta_k>0$ and $\alpha_i+\beta_i>0,\forall i\in\{2,\ldots,k-1\}$. If $\vec{\alpha}\wedge\vec{\beta}\neq 0$, necessarily $k\geq 2$. By eventually alternating $u_1,\ldots,u_k$, we can assume that $\left(\alpha_1,\alpha_2\right)$ and $\left(\beta_1,\beta_2\right)$ are linearly independent. Write
$$
\left\{
\begin{aligned}
    x_1&=\tilde{x}_1v_1\\
    x_2&=\tilde{x}_2v_2,
\end{aligned}
\right.
$$
where $v_1,v_2$ are indeterminates that help to get rid of $\zeta$ and $\eta$.

After substituting $x_i,i=1,2$ by $\tilde{x}_iv_i$, we see that it suffices to solve
\[ \begin{bmatrix}
    \alpha_1 & \alpha_2\\
    \beta_1 & \beta_2
\end{bmatrix}
\begin{bmatrix}
    v_1\\
    v_2
\end{bmatrix}=
\begin{bmatrix}
    \zeta^{-1}\\
    \eta^{-1}
\end{bmatrix} \]
The solution is evident by taking the inverse of the matrix
\[ \begin{bmatrix}
    \alpha_1 & \alpha_2\\
    \beta_1 & \beta_2
\end{bmatrix} \]
on both sides of the above equation. We are in case (3) of Definition \ref{spm}.

When $\vec{\alpha}\wedge\vec{\beta}=0$, since $\vec{\alpha}\neq\vec{0}$, we could absorb $\zeta$ into some $u_i$ with $\alpha_i\neq 0$ and write
$$
\left\{
\begin{aligned}
	x_1&=\left(u_1^{\alpha_1}\ldots u_k^{\alpha_k}\right)^m,m\in\mathbb{N}^*\\
	x_2&=P\left(u_1^{\alpha_1}\ldots u_k^{\alpha_k}\right)+R\left(u_1,\ldots,u_k,v_1,\ldots,v_{n-k}\right).
\end{aligned}
\right.
$$
Then we make a similar analysis as that in \ref{case12} and we get cases (1) and (2) of Definition \ref{spm}.
\end{enumerate}
This finishes the proof.
\end{proof}

\section{\red{Log-rank adapted morphisms}}
\label{lam}
We define log-rank adapted morphism following the discussion with A. Belotto. This kind of morphism is essential to our proof, and turns out to be useful in many other contexts as well.

Let $\Phi:(X,E)\rightarrow (Y,F)$ be a morphism of pairs. For any $i\in 1,\ldots,\mbox{min}\{n,N\}$, note $\Sigma_i=\{\pa\in E:logrk_{\pa}\left(\Phi\right)\leq \mbox{min}\{n,N\}-i\}$ and $Y_i=\overline{\Phi\left(\Sigma_i\right)}$. Observe that $\Sigma_i$ is a $\mathbb{K}$-variety by definition. The ideal sheaf of $Y_i$ is denoted by $\mathcal{I}_{Y_i}$.

\begin{definition}[Log-rank adapted morphism]\label{wm}
	Let $\Phi:(X,E)\rightarrow (Y,F)$ be a quasi-prepared morphism (not necessarilly dominant), where $\mbox{dim}X=n$ and $\mbox{dim}Y=N$. \red{Denote by $\Phi_0:(X,E) \rightarrow (Y,\emptyset)$ the morphism where the divisor $F$ is declared to be empty.} 
    
    We say that $\Phi$ is log-rank adapted at a point $\pa\in E$ if we can choose $E$-adapted coordinates $\left(\vec{u},\vec{v}\right)$ at $\pa$ and local coordinates $z_1,\ldots,z_N$ at $\Phi\left(\pa\right)$ such that
    \begin{enumerate}
        \item $\Phi=\left(\Phi_1,\ldots,\Phi_N\right)$ satisfies
	$$
	\Phi_1=v_1,\ldots,\Phi_r=v_r,\Phi_{r+1}=\vec{u}^{\vec{\alpha_1}},
	$$
	where $r$ is the log-rank of \red{$\Phi_0$} at $\pa$, $\vec{\alpha_1}\in\mathbb{N}^s$, $\mathcal{I}_{Y_{N-r}}$ is generated by $z_{r+1},\ldots,z_N$ at $\Phi\left(\pa\right)$ and \red{$\Phi_0^*\mathcal{I}_{Y_{N-r}}$} is generated by $\Phi_{r+1}=\Phi^*\left(z_{r+1}\right)$ at $\pa$.
        \item There is a finite sequence of SNC divisors $E^{(k)}$ such that $E^{(1)}=E$, $E^{(k+1)}\subseteq E^{(k)}$, $\forall k=1,\ldots,\mbox{min}\{n,N\}$ and
        \begin{enumerate}
            \item logrk$_{\pa}\left(\Phi\right)=\mbox{min}\{n,N\}-k$, $\forall \pa\in E^{(k)}\setminus E^{(k+1)}$;
            \item $\forall\pa\in E^{(k)}\setminus E^{(k+1)}$, $\Phi\left(E^{(k)}\right)$ is smooth at $\Phi\left(\pa\right)$.
        \end{enumerate}
    \end{enumerate}
\end{definition}

We prove that quasi-prepared morphisms can be made log-rank adapted.

\begin{theorem}[Reduction to log-rank adapted morphism]\label{lem:LrAdapted}
    Let $\Phi: (X,E) \to (Y,F)$ be a \red{quasi-prepared morphism, where $\mbox{dim}X=n$ and $\mbox{dim}Y=N$.} 
\red{Then there exists a sequence of blowings-up $\sigma: (X',E') \to (X,E)$, whose centres are contained in $E$ and its pull-backs, such that the composed morphism $\Phi' = \Phi \circ \sigma$ is everywhere log-rank adapted.}
\end{theorem}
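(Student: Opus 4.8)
The plan is to prove the statement by a descending induction on the logarithmic rank: the generic (top log-rank) stratum of $E$ is put directly into the normal form of Definition \ref{wm} by means of the algebraic Rank Theorem, while the lower strata are brought into the same form by resolving their images and monomialising the relevant ideal sheaves, all through blowings-up with centres in $E$ and its pull-backs. \textbf{Step 1: the log-rank stratification.} First I would record that $\pa \mapsto \text{logrk}_\pa(\Phi)$ is upper semi-continuous, since in $E$-adapted coordinates it is the rank of the logarithmic Jacobian matrix and its drop locus is cut out by the vanishing of minors. Hence the sets $\Sigma_k = \{\pa \in E : \text{logrk}_\pa(\Phi) \leq \min\{n,N\}-k\}$ are closed and form a descending chain $\Sigma_1 \supseteq \Sigma_2 \supseteq \cdots$, and $\Sigma_k \setminus \Sigma_{k+1}$ is exactly the locus of log-rank $\min\{n,N\}-k$, which gives condition (2)(a) for the candidate divisors $E^{(k)} = \Sigma_k$. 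Lemma \ref{Coincident} lets me reinterpret the log-rank at an $s$-point $\pa$ on a stratum $D = E_{i_1}\cap\cdots\cap E_{i_s}$ as the geometric rank $\text{grk}_\pa(\Phi_0|_D)$. Since quasi-preparedness forces $\Phi(E)\subseteq F$, each such $D$ has image of codimension $\geq 1$, so Proposition \ref{rankthm} bounds the log-rank on $E$ by $\min\{n,N\}-1$; this is what makes $E^{(1)} = E = \Sigma_1$ consistent with the indexing of condition (2).

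\textbf{Step 2: normal form and smoothness of images on a constant-rank stratum.} On the open stratum $\Sigma_{N-r} \setminus \Sigma_{N-r+1}$ the log-rank is constant, equal to $r$. There I would invoke the algebraic Rank Theorem --- Proposition \ref{rankthm} together with its local counterpart \cite[Theorem 6.7]{rond2009homomorphisms} --- for $\Phi_0$ restricted to the relevant divisor stratum. Constant geometric rank $r$ gives a local normal form in which the image is smooth of dimension $r$ near $\Phi(\pa)$, yielding the smoothness in condition (2)(b), and in which $r$ components become the free coordinates, i.e.\ $\Phi_1 = v_1,\ldots,\Phi_r = v_r$. The dimension count of Proposition \ref{rankthm} then yields $\dim Y_{N-r} = r$ and that $\mathcal{I}_{Y_{N-r}}$ is generated at $\Phi(\pa)$ by the $N-r$ transverse coordinates $z_{r+1},\ldots,z_N$. \textbf{Step 3: monomialising the image ideal.} To obtain the remaining component $\Phi_{r+1} = \vec{u}^{\vec{\alpha_1}}$ and the condition that $\Phi_0^*\mathcal{I}_{Y_{N-r}}$ be generated by $\Phi_{r+1}$, I would principalise the pulled-back ideal sheaf $\Phi_0^*\mathcal{I}_{Y_{N-r}}$ by blowings-up of $X$ with centres contained in $E$ and its pull-backs. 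Quasi-preparedness gives $V(\mathcal{F}_0(\Phi)) \subseteq E$, so these ideals are supported in $E$ and admit admissible centres; Lemma \ref{lem:PreserveQuasi-prepared} keeps the morphism quasi-prepared, and Lemma \ref{formalpmtopm} allows me to verify the monomial form formally. This completes condition (1) on the stratum under consideration.

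\textbf{Step 4: descending induction and termination.} Following the scheme of \cite[Theorem 3.4]{da2017resolution}, I would organise the argument as a descending induction on $k$. Having adapted the morphism over the strata of log-rank $> r$, I pass to $\Sigma_{N-r}$, first blowing up (with centres in $E$) to make $\Sigma_{N-r}$ itself an SNC divisor as required for $E^{(k)}$, and then resolving $\Phi'(\Sigma_{N-r})$ and re-principalising as in Steps 2 and 3. The crucial point is that all centres lie in the deeper strata, so each blowing-up is an isomorphism over the already-adapted higher strata and does not disturb the normal form achieved there. The process terminates because there are finitely many strata and each principalisation terminates; the resulting $\Phi' = \Phi \circ \sigma$ is then log-rank adapted at every point of $E'$.

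The main obstacle I expect lies in Step 4: the Rank Theorem supplies only a purely local normal form on a fixed constant-rank stratum, and the delicate part is globalising it to a full neighbourhood in $X$ while arranging the smoothness of the images of all lower strata using \emph{only} blowings-up of the source $X$ with centres in $E$. Translating a resolution of the image of a stratum into admissible source centres, and doing so compatibly across the whole filtration without destroying the higher-stratum normal forms, is exactly where the new rank and log-rank arguments are needed and constitutes the technical heart of the reduction.
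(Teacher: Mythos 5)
Your Steps 1 and 3 match the paper's proof in substance (the same strata $\Sigma_i$ and images $Y_i$, the identification of log-rank with the geometric rank along strata of $E$ via Lemma \ref{Coincident}, the dimension bound from Proposition \ref{rankthm}, and the principalisation of $\Phi_0^*\mathcal{I}_{Y_{N-r}}$ by source blowings-up with centres in $E$). But Step 2 contains a genuine gap, which then turns the ``main obstacle'' you flag in Step 4 into a real, unresolved hole. Constant geometric (or log-) rank along a stratum does \emph{not} imply that the image is smooth near $\Phi(\pa)$: in the algebraic category the Rank Theorem controls only the \emph{dimension} of the image --- this is exactly what Proposition \ref{rankthm} and \cite[Theorem 6.7]{rond2009homomorphisms} deliver --- and says nothing about its regularity. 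For instance, $t \mapsto (t^2-1,\, t^3-t)$ has geometric rank $1$ at every point, yet its image is the nodal cubic $y^2 = x^2(x+1)$, singular at the common image of $t=\pm 1$; the failure comes from distinct points sharing an image, which no local normal form can detect. So condition (2)(b) of Definition \ref{wm} cannot be extracted from a constant-rank normal form, and your Step 4 correctly identifies --- but does not solve --- the problem of achieving smoothness of the images of the strata using only blowings-up of the source.

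The paper's proof handles this with a device absent from your proposal: it never makes the images $Y_k$ smooth at all. Instead it enlarges the next divisor, setting $Y'_{k+1} = Y_{k+1} \cup \mbox{Sing}\,Y_k$, principalises the pullback $\Phi^*\mathcal{I}_{Y'_{k+1}}$ by blowings-up of $X$ with centres contained in $E$ and its pull-backs, and defines $E^{(k+1)} = \Phi^{-1}(Y'_{k+1})$. Since Definition \ref{wm}(2)(b) requires smoothness of $\Phi(E^{(k)})$ only at points of $E^{(k)} \setminus E^{(k+1)}$, every point lying over $\mbox{Sing}\,Y_k$ is simply absorbed into $E^{(k+1)}$; the bookkeeping stays consistent because $\dim(\mbox{Sing}\,Y_k) < \dim Y_k \leq \min\{n,N\}-k$, so that $\dim Y'_{k+1} \leq \min\{n,N\}-(k+1)$ and the log-rank bound of condition (2)(a) holds on the new divisor. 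No target modification, and no ``translation'' of a target resolution into admissible source centres, is ever needed. This also dissolves your descending-induction and preservation worries: the paper processes $k = 1, 2, \ldots$ in increasing order, each step being a principalisation in the source only, with nothing to protect along the way; condition (1) is then obtained at the very end, by an étale change of coordinates as in \cite[Theorem 3.7]{da2017resolution}, using precisely the constancy of the log-rank on $E^{(k)} \setminus E^{(k+1)}$, the smoothness of the image there, and the principality of $\Phi_0^*\mathcal{I}_{Y_{N-r}}$ --- not a rank-theorem normal form.
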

\begin{proof}
At closed points in $E$, the log-rank coincides with the geometric rank restricted to $E$ by Lemma \ref{Coincident}. Thus, by Proposition \ref{rankthm}, dim$Y_1\leq \mbox{min}\{n,N\}-1$.

After resolving $\mathcal{I}_{Y_1}$, we again define $\Sigma_i=\{\pa\in E:logrk_{\pa}\left(\Phi\right)\leq \mbox{min}\{n,N\}-i\}$ and $Y_i=\overline{\Phi\left(\Sigma_i\right)}$ for $i=1,2$. Then we resolve $Y_2^{\prime}=Y_2\cup\mbox{Sing}Y_1$ and the morphism after resolution is still denoted by $\Phi$. It turns out that $\Sigma_1=\Phi^{-1}\left(Y_1\right)$. Hence, we define $E^{(1)}$ as $\Phi^{-1}\left(Y_1\right)$. Note that $E^{(1)}=E$ by definition of log-rank. The inverse image $\Phi^{-1}\left(Y_2^{\prime}\right)$ is an SNC divisor. By definition of log-rank, logrk$_{\pa}\left(\Phi\right)\leq\mbox{dim}Y_2^{\prime}\leq\mbox{min}\{n,N\}-2$, $\forall\pa\in\Phi^{-1}\left(Y_2^{\prime}\right)$. Define $E^{(2)}=\Phi^{-1}\left(Y_2^{\prime}\right)$. We get the second condition of Definition \ref{wm} for $k=1$.

For $k=2$, we define $\Sigma_3$ and $Y_3$ analogously and resolve $Y_3\cup\mbox{Sing}Y_2$. Then we define $E^{(1)},E^{(2)},E^{(3)}$ in a similar way.

Continue in the way of last paragraph, we achieve the second condition of Definition \ref{wm} for $k=1,\ldots,\mbox{min}\{n,N\}$.

Finally, by the constancy of log-rank over $E^{(k)}\setminus E^{(k+1)}$, smoothness of $\Phi\left(E^{(k)}\right)$ at $\Phi\left(\pa\right)$ and prinicpality of $\Phi_0^*\mathcal{I}_{Y_{N-r}}$, one can make étale change of coordinates as in \cite[Theorem 3.7]{da2017resolution} to get the first condition of Definition \ref{wm}.
\end{proof}

\red{Finally, the definition of log-rank adapted morphism is exactly what is needed to extend the main result of \cite{da2017resolution}, i.e. Theorem 1.3 of \cite{da2017resolution}. More precisely,}

\begin{theorem}\label{ESP}
\red{Let $\Phi: (X,E) \to (S,F)$ be a log-rank adapted morphism where $X$ is a $3$-fold and $S$ is a surface. There exists a finite sequence of blowings-up $\sigma: (X',E') \to (X,E)$, whose centres are over $E$
, such that the composed morphism $\Phi' = \Phi \circ \sigma$ is log-rank adapted and $\mathcal{F}_1(\Phi')$ is principal monomial. In particular, $\Phi' = \Phi \circ \sigma$ is everywhere strongly prepared.}
\end{theorem}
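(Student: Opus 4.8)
The plan is to principalise and monomialise the top logarithmic Fitting ideal $\mathcal{F}_1(\Phi)$ by a finite sequence of blowings-up with centres over $E$, and then to conclude via Proposition \ref{light} that the resulting morphism is everywhere strongly prepared. This adapts the strategy of Theorem 1.3 of \cite{da2017resolution}, with the log-rank adapted hypothesis (Definition \ref{wm}) playing the role of the birational rank conditions used there. Recall that $\mathcal{F}_1(\Phi)$ is generated by the $2\times 2$ minors of the logarithmic Jacobian matrix; since $\Phi$ is quasi-prepared onto a surface it is supported on $E$ (cf. the remark following Definition \ref{LFD}), and it is the unit ideal wherever the logarithmic Jacobian has full rank $2$. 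Thus it suffices to work locally along the locus in $E$ where the logarithmic rank drops, and by Lemma \ref{formalpmtopm} the principal-monomial property may be tested in $E$-adapted formal or étale coordinates.

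First I would record the local form of $\mathcal{F}_1(\Phi)$, combining the normal form of Definition \ref{wm} with the computation already carried out in the proof of Proposition \ref{light}. If $\Phi(\pa)$ is a $1$-point, one has $x_1=(\vec u^{\vec\alpha})^m$ and $y_1=P(\vec u^{\vec\alpha})+R(\vec u,\vec v)$, and $\mathcal{F}_1(\Phi)$ is generated by the coefficients of $\Phi^{*}\!\left(\frac{dx_1}{x_1}\wedge dy_1\right)$; if $\Phi(\pa)$ is a $2$-point, it is generated by the coefficients of $\Phi^{*}\!\left(\frac{dx_1}{x_1}\wedge\frac{dx_2}{x_2}\right)$, whose leading parts are the $2\times 2$ minors of the exponent matrix. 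In each case $\mathcal{F}_1(\Phi)$ is, up to units, generated by monomials in $\vec u$ together with the free contributions encoded in $R$, so the only obstructions to its being principal monomial are these higher-order terms in the free variables $\vec v$ and the failure of the exponent vectors to be comparable.

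The heart of the argument is then to remove these obstructions by combinatorial (toric) blowings-up whose centres lie in $E$ and its successive transforms, proceeding by induction on the log-rank stratification. After the étale change of coordinates supplied by the implicit function theorem, exactly as in the two subcases of case (1) of the proof of Proposition \ref{light}, the free directions can be separated off and the remaining principalisation becomes a monomialisation of a monomial ideal in the divisorial variables $\vec u$. The admissible centres are the smooth loci prescribed by the stratification $E^{(1)}\supseteq E^{(2)}\supseteq\cdots$ of Definition \ref{wm}: the smoothness of $\Phi(E^{(k)})$ guaranteed there ensures these centres have normal crossings with $E$, blowing them up is compatible with the adapted normal form, and Lemma \ref{lem:PreserveQuasi-prepared} keeps the morphism quasi-prepared throughout. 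This turns $\mathcal{F}_1(\Phi)\cdot\mathcal{O}_{X'}$ into a locally principal ideal which is a monomial in the components of $E'$; if the adapted normal form is disturbed, a final application of Theorem \ref{lem:LrAdapted} re-establishes it, and since its centres again lie in $E$ this leaves $\mathcal{F}_1$ principal monomial.

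The main obstacle is precisely this compatibility. The general principalisation theorem only produces a simple normal crossings generator, which could spuriously involve the free coordinates; one must show that, thanks to log-rank adaptedness, the centres can always be chosen inside $E$ so that the resulting generator is monomial with respect to $E'$, and that this is achievable while simultaneously maintaining the log-rank adapted structure. This is where the normal form of Definition \ref{wm} and the adaptation of the combinatorial argument of \cite{da2017resolution} do the real work. Once $\Phi'=\Phi\circ\sigma$ is a log-rank adapted, quasi-prepared morphism onto a surface with $\mathcal{F}_1(\Phi')$ locally principal monomial, Proposition \ref{light} shows that $\Phi'$ is everywhere strongly prepared, which completes the proof.
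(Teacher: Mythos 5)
Your overall strategy is the paper's: reduce to principalising $\mathcal{F}_1(\Phi)$ over the points where the log-rank is $0$ (at log-rank $1$ and $2$ points the log-rank adapted normal form of Definition \ref{wm} already makes it principal monomial), run the algorithm of \cite{da2017resolution}, and conclude by Proposition \ref{light}. However, the proposal has a genuine gap at the core step. Your reduction at log-rank-$0$ points is circular: the \'etale change of coordinates ``supplied by the implicit function theorem, exactly as in the two subcases of case (1) of the proof of Proposition \ref{light}'' is available in that proof only \emph{because} the top log-Fitting ideal is there assumed principal monomial --- it is precisely the conclusion you are trying to arrange. At a log-rank-$0$ point one has $x_1=\left(\vec u^{\vec\alpha}\right)^m$ and $z=P\left(\vec u^{\vec\alpha}\right)+R\left(\vec u,\vec v\right)$ with $R$ genuinely mixed in the free variables, and eliminating these higher-order terms is not a toric monomialisation of a monomial ideal in $\vec u$: it is the Newton-polygon and invariant-descent analysis of \cite{da2017resolution}, which your write-up defers wholesale to ``the adaptation \ldots does the real work'' without verifying that it applies.

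And it does not apply verbatim: this is the one point your proposal needed to identify and does not. In \cite{da2017resolution} the target is a $3$-fold, and the proof of its Lemma 6.4 uses the log-Fitting ideal $\mathcal{F}_0(\Phi)$ coming from $3$-forms; when the target is a surface this ideal does not exist (here $k\leq\min\{n,N\}=2$, so only $\mathcal{F}_2$ and $\mathcal{F}_1$ are defined). The paper's actual new content in this proof is exactly the repair: Lemma \ref{HA} shows that the auxiliary ideal sheaf $\mathcal{H}_a$ of Lemma 6.4 of \cite{da2017resolution} is still well-defined, by proving $a_{20}(u,w)\neq 0$ at $1$-points via a contradiction with $V\left(\mathcal{F}_1\right)\subseteq E$, together with the observation that the relevant $2\times 2$ minors (standing in for the $3\times 3$ minors there) remain supported in $E$. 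Without this substitute your appeal to \cite{da2017resolution} is incomplete. Finally, your closing patch --- re-applying Theorem \ref{lem:LrAdapted} and asserting that, since its centres lie in $E$, ``this leaves $\mathcal{F}_1$ principal monomial'' --- is itself unproved: for a blowing-up $\sigma$ one only has in general $\mathcal{F}_1(\Phi\circ\sigma)\subseteq \mathcal{F}_1(\Phi)\cdot\mathcal{O}_{X'}$, so containment in a principal monomial ideal does not by itself give principal monomiality; the paper avoids this issue by maintaining log-rank adaptedness along the algorithm of \cite{da2017resolution} rather than restoring it afterwards.
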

\begin{proof}
\red{By the definition of log-rank adapted, the ideal $\mathcal{F}_2(\Phi)$ is everywhere principal monomial, and $\mathcal{F}_1(\Phi)$ is principal monomial at every point $\pa \in X$ where the log-rank equals to $1$. We must, therefore, principalise $\mathcal{F}_1(\Phi)$ over points $\pa$ where the log-rank is $0$. We may now follow the proof of Theorem 1.1 from \cite{da2017resolution}, but with the following difference: the log-Fitting ideal $\mathcal{F}_0(\Phi)$ does not exist. This difference only intervenes in the proof of Lemma 6.4 in \cite{da2017resolution}. However, it does not matter because, firstly, the ideal sheaf $\mathcal{H}_a$ mentioned in the proof of Lemma 6.4 is well-defined in our context (see Lemma \ref{HA}); secondly, the 2$\times$2 minors, which are incarnations of 3$\times$3 minors mentioned in the proof of Lemma 6.4, are still supported in $E$.}
\end{proof}

\begin{lemma}\label{HA}
    The ideal sheaf $\mathcal{H}_a$ mentioned in the proof of \cite[Lemma 6.4]{da2017resolution} is well-defined in our context.
\end{lemma}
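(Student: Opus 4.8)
The plan is to reconstruct $\mathcal{H}_a$ verbatim from the proof of \cite[Lemma 6.4]{da2017resolution}, to isolate the single place where that proof invokes the top log-Fitting ideal, and to show that the substitute available in our dimensions behaves identically. First I would recall the ambient situation of \cite[Lemma 6.4]{da2017resolution}: there the source and target have the same dimension, the logarithmic Jacobian matrix is square of size $3\times 3$, and its maximal ($3\times 3$) minor generates the top log-Fitting ideal $\mathcal{F}_0$. The ideal sheaf $\mathcal{H}_a$ is then assembled, at a point $a$ where the log-rank of the morphism drops, from these maximal minors together with the auxiliary generators read off from the lower-rank strata of $\Phi$. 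The key observation is that the only feature of $\mathcal{F}_0$ actually used is that it is the \emph{top} log-Fitting ideal, generated by the maximal minors of the logarithmic Jacobian; equidimensionality is never used beyond producing a square matrix.

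Next I would perform the substitution forced by our setting. Since $\dim X = 3$ and $\dim S = 2$, the logarithmic Jacobian of $\Phi$ is a $2\times 3$ matrix, its maximal minors are the $2\times 2$ minors, and these generate $\mathcal{F}_1(\Phi)$, which is the top log-Fitting ideal here; the index needed for $\mathcal{F}_0(\Phi)$ is unavailable because $\Omega^3_S(\log F)=0$. I would therefore define $\mathcal{H}_a$ by the formula of \cite[Lemma 6.4]{da2017resolution}, reading each occurrence of $\mathcal{F}_0$ (respectively of a maximal $3\times 3$ minor) as $\mathcal{F}_1(\Phi)$ (respectively as a maximal $2\times 2$ minor). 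This is precisely the sense in which the $2\times 2$ minors are the incarnations of the $3\times 3$ minors of the equidimensional case.

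It then remains to verify that this $\mathcal{H}_a$ is well-defined, namely that it is a coherent ideal subsheaf of $\mathcal{O}_X$ independent of the chosen $E$-adapted coordinates. By Definition \ref{LFD} (following \cite{bbmono}) the ideals $\mathcal{F}_1(\Phi)$ and $\mathcal{F}_2(\Phi)$ are intrinsic coherent ideal sheaves, so the only check left is that the remaining, non-Fitting generators entering $\mathcal{H}_a$ transform correctly under a change of $E$-adapted coordinates; this verification is formally identical to the one in \cite{da2017resolution}, the size of the minors playing no role in it. For the use made of $\mathcal{H}_a$ downstream I would also record that the top minors are supported in $E$: quasi-preparedness yields $V(\mathcal{F}_1(\Phi))\subseteq E$, the argument of the remark following Definition \ref{LFD} applying here to the top log-Fitting ideal $\mathcal{F}_1(\Phi)$ just as it does to $\mathcal{F}_0$ in the equidimensional case.

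The step I expect to be the real obstacle is the bookkeeping one: confirming that each appeal to $\mathcal{F}_0$, and to the vanishing of the $3\times 3$ minor, in the original proof of \cite[Lemma 6.4]{da2017resolution} is genuinely an appeal to the \emph{top} Fitting ideal rather than a disguised use of the squareness of the Jacobian. Were such a hidden use of equidimensionality to occur, the substitution would break; but since the construction of $\mathcal{H}_a$ is determinantal in nature, I expect that once this is checked the ideal sheaf and all of its formal properties transfer without any new analytic input.
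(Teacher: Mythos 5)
Your proposal misreads what ``well-defined'' has to mean here, and as a result it never engages with the actual content of the lemma. Coherence and coordinate-independence of $\mathcal{H}_a$ are not the issue: the ideal is assembled from coefficients $a_{i0}(u,w)$ read off the normal form (4.2) of \cite{da2017resolution}, and what can fail when the target is a surface is that $\mathcal{H}_a$ is the \emph{zero} ideal. In the equidimensional setting of \cite{da2017resolution} there are three components $\sigma_1,\sigma_2,\sigma_3$ feeding into the construction; in our setting only $\sigma_1$ and $\sigma_2$ survive, so one must prove that the surviving coefficient $a_{20}(u,w)$ is not identically zero at a $1$-point $a$ --- the paper opens its proof with exactly this reduction (``it suffices to prove that $\mathcal{H}_a\neq 0$''). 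This is precisely the ``hidden use of the squareness of the Jacobian'' that you flag in your final paragraph and then dismiss as unlikely: it is real, and dealing with it is the whole lemma, not bookkeeping.

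The paper supplies the missing argument by contradiction: if $a_{20}=0$, then
\[
d\sigma_1\wedge d\sigma_2 \;=\; u^{\alpha+\delta}\Bigl(\tfrac{T_2}{v}+v\,\partial_v\bigl(\tfrac{T_2}{v}\bigr)\Bigr)\,\tfrac{du^{\alpha}}{u^{\alpha}}\wedge dv,
\qquad
\tfrac{T_2}{v}=\tilde T_2(u,v,w)\,v^{d-1}+\sum_{j=1}^{d-1}a_{2j}(u,w)\,v^{j-1},
\]
and since $d>1$ the parenthesised factor has zeros outside the divisor $\{u=0\}$, contradicting $V(\mathcal{F}_1(\Phi))\subseteq E$. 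Note that the support condition $V(\mathcal{F}_1)\subseteq E$, which in your write-up is merely ``recorded'' for downstream use, is the engine of the proof: it is applied to the generator of $\mathcal{F}_1$ at the point in question to force $a_{20}\not\equiv 0$. Your substitution-and-invariance plan, even if carried out perfectly, would only confirm that $\mathcal{H}_a$ is an intrinsically defined ideal sheaf; it would not exclude $\mathcal{H}_a=0$, and so it does not prove the lemma.
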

\begin{proof}
    Use the notations in \cite{da2017resolution}. It suffices to prove that $\mathcal{H}_a\neq 0$ at a $1$-point $a\in X$. Now we only have $\sigma_1$ and $\sigma_2$. So it suffices to prove that $a_{20}\left(u,w\right)$ in the form (4.2) of \cite{da2017resolution} is non-zero, we will do it by contradiction.

    Assume that $a_{20}\left(u,w\right)=0$. Then $d\sigma_1\wedge d\sigma_2=du^{\alpha}\wedge du^{\delta}T_2=u^{\alpha+\delta}\left(\frac{T_2}{v}+v\partial_v\left(\frac{T_2}{v}\right)\right)\frac{du^{\alpha}}{u^{\alpha}}\wedge dv$. By the form (4.2), we write:
\[
\frac{T_2}{v}=\tilde{T}_2\left(u,v,w\right)v^{d-1}+\sum_{j=1}^{d-1}a_{2j}\left(u,w\right)v^{j-1}.
\]
    By hypothesis, $d>1$. So there is always a non-zero solution of $\frac{T_2}{v}+v\partial_v\left(\frac{T_2}{v}\right)=0$ outside the exceptional divisor locally defined by $u=0$, which contradicts the fact that $V\left(\mathcal{F}_1\right)\subseteq E$.
\end{proof}

\section{Proof of \red{Theorem \ref{mainthm}}}
In this section, we prove the main theorem \ref{mainthm}.

\begin{proof}
We take a morphism of pairs, reduce it to a log-rank adapted morphism by Lemma \ref{lem:BasicQuasiPrepared} and Theorem \ref{lem:LrAdapted}, then make it strongly prepared by Theorem \ref{ESP}. In light of Lemma \ref{light}, we use Theorem \ref{blackbox} to finish the proof.
\end{proof}

\noindent
\textbf{Acknowledgements.} During the preparation of this work, the author was supported by the project ``Plan d’investissements France 2030", IDEX UP ANR-18-IDEX-0001 at Université Paris Cité. The associated laboratory is Institut de Mathématiques de Jussieu-Paris Rive Gauche. I gratefully thank my advisor André Belotto for his careful review and for his many insightful comments that helped me to
improve this paper. I am also grateful to Guillaume Rond for his several very helpful comments and discussions on the algebro-geometric part of the paper, as well as Edward Bierstone for his contributive comments.

\bibliographystyle{amsalpha}
\bibliography{references}

------------------------------------------------------------------------------

\end{document}